\documentclass[12pt, reqno]{amsart}

\usepackage{amsmath}
\usepackage{amssymb,amsfonts,amscd}
\usepackage{latexsym}
\usepackage{mathrsfs}
\usepackage{color}

\usepackage{hyperref}

\newcommand\al{\alpha}
\newcommand\be{\beta}
\newcommand\ga{\gamma}
\newcommand\Ga{\Gamma}

\newcommand\De{\Delta}

\newcommand\la{\lambda}

\newcommand\si{\sigma}

\newcommand\na{\wt{\mathrm{m}}}

\newcommand\R{\mathbb R}

\newcommand\Z{\mathbb Z}
\newcommand\Y{\mathbb Y}
\newcommand\F{\mathbb F}

\newcommand\Sym{\operatorname{Sym}}

\newcommand\her{{\operatorname{her}}}

\newcommand\ex{\operatorname{ex}}

\newcommand\GL{\mathrm{GL}}
\newcommand\U{\mathrm{U}}
\newcommand\even{{\mathrm{even}}}
\newcommand\odd{{\mathrm{odd}}}
\newcommand\row{{\mathrm{row}}}
\newcommand\col{{\mathrm{col}}}
\newcommand\HL{{\mathrm{HL}}}

\newcommand\YHL{\wt\Y^{\mathrm{HL}}}
\newcommand\rel{\nearrow\!\!\!\nearrow}

\newcommand\conv{\circledast}

\newcommand\oo{\mathfrak o}

\newcommand\wt{\widetilde}

\newcommand\ccdot{\,\cdot\,}
\newcommand\wh{\widehat}

\newtheorem{theorem}{Theorem}[section]
\newtheorem{proposition}[theorem]{Proposition}
\newtheorem{lemma}[theorem]{Lemma}

\newtheorem{corollary}[theorem]{Corollary}

\theoremstyle{definition}
\newtheorem{definition}[theorem]{Definition}
\newtheorem{remark}[theorem]{Remark}

\newtheorem{problem}[theorem]{Problem}

\numberwithin{equation}{section}

\begin{document}

\title[]{Hall-Littlewood-positive harmonic functionals on the algebra of symmetric functions}

\author{Cesar Cuenca and Grigori Olshanski}

\date{}

\thanks{}

\begin{abstract}
We study the problem of describing the set of real functionals on the quotient $\Sym/(p_2-1)$ of the ring of symmetric functions that are nonnegative on the images of certain modified Hall-Littlewood symmetric functions.
This question is equivalent to the problem, posed in \cite{CO1}, 
of describing the set of coadjoint-invariant measures for unitary groups over a finite field in the infinite-dimensional setting.
Our main results constitute partial progress towards this problem. Firstly, we show that the desired set of functionals is very large, in the sense that it contains explicit families of examples depending on infinitely many parameters.
Secondly, we provide an analogue of Kerov's mixing construction that produces new sought after functionals from known old ones.
This construction depends on an explicit ``$p_2$-twisted action'' of $\Sym$ on itself and the resulting dual map that makes $\Sym$ into a comodule.
Finally, our third main result explains the relation between the $p_2$-twisted comultiplication and the usual comultiplication on $\Sym$.
\end{abstract}

\maketitle


\section{Introduction}\label{sect1}

\subsection{}  Let $\Sym=\R[p_1,p_2,\dots]$ denote the algebra of symmetric functions over the field of real numbers; here, $p_1,p_2,\dots$ are the Newton power sums. Next, let $\Y$ denote the set of all partitions, which are identified with their Young diagrams, and let $t$ be a real parameter. We denote by $\{P_\la(\ccdot;t): \la\in\Y\}$  the homogeneous basis of $\Sym$ formed by the Hall-Littlewood symmetric functions with parameter $t$; see~\cite[Ch. III]{Mac}. We often use `HL' as a shorthand for `Hall-Littlewood'. 

\begin{definition}\label{def1.A}
Let $\varphi:\Sym\to\R$ be a linear functional. We fix $t\in(-1,1)$.

(i) We say that $\varphi$ is \emph{$p_1$-harmonic} if $\varphi(p_1f)=\varphi(f)$, for any $f\in\Sym$.

(ii) We say that $\varphi$ is \emph{$t$-HL-positive} if $\varphi(P_\la(\ccdot;t))\ge0$, for all $\la\in\Y$.

(iii) We denote by $\Phi(t)$ the set of all linear functionals which are both $p_1$-harmonic and $t$-HL-positive. Furthermore, we denote by  $\Phi_1(t)$ the subset of $\Phi(t)$ singled out by the additional normalization condition $\varphi(1)=1$.
\end{definition}

The space of linear functionals $\Sym\to\R$ is isomorphic to $\R^\infty$.
Taken separately, conditions (i) and (ii) determine two subsets whose geometric structure is simple: the second set looks like the cone $\R_{\ge0}^\infty$ in $\R^\infty$, while the first set is a linear subspace of $\R^\infty$. It is the combination of the two conditions that makes the picture nontrivial.

Note that $\Phi(t)$ is a convex cone and $\Phi_1(t)$ is a convex set that serves as a distinguished base of the cone $\Phi(t)$. We denote by $\ex(\Phi_1(t))$ the set of \emph{extreme points} of $\Phi_1(t)$. Then the functionals of the form $c\varphi$ with $\varphi\in\ex(\Phi_1(t))$ fixed and $c\ge0$ are the \emph{extreme rays} of the cone $\Phi(t)$.

The knowledge of $\ex(\Phi_1(t))$ serves to describe $\Phi_1(t)$ and $\Phi(t)$, because $\Phi_1(t)$ is a \emph{Choquet simplex}: it is isomorphic, in a natural way, to the convex set of probability measures on $\ex(\Phi_1(t))$ (see Proposition~\ref{prop2.A}). Likewise, under this isomorphism, the cone $\Phi(t)$ is realized as the set of finite measures on $\ex(\Phi_1(t))$.

\subsection{} 

A linear functional $\varphi:\Sym\to\R$ is said to be \emph{multiplicative} if $\varphi(1)=1$ and $\varphi(ab)=\varphi(a)\varphi(b)$, for any $a,b\in\Sym$. Because $\Sym$ is freely generated by the power-sum symmetric functions $p_1,p_2,\dots$, a multiplicative functional $\varphi$ is uniquely determined by its values $\varphi(p_k)$, $k=1,2,\dots$, and these values can be arbitrary real numbers.

\begin{theorem}[Kerov-Matveev]\label{thm1.A}
Fix $t\in(-1,1)$. The functionals $\varphi\in\ex(\Phi_1(t))$ are precisely those multiplicative functionals whose values on the generators $p_1,p_2,\dots$ are given by the formulas
\begin{equation}
\varphi(p_1)=1,\quad \varphi(p_k)=\sum_{i=1}^\infty \al_i^k+(-1)^{k-1}\frac{1}{1-t^k}\sum_{j=1}^\infty\be_j^k, \quad k=2,3,\dots,
\end{equation}
for some real parameters $\al_i$ and $\be_j$ that satisfy the following conditions:
\begin{equation}
\al_1\ge\al_2\ge\dots\ge0,\quad \be_1\ge\be_2\ge\dots\ge0,\quad \sum_{i=1}^\infty\al_i +\frac{1}{1-t}\sum_{j=1}^\infty\be_j\le1.
\end{equation}
\end{theorem}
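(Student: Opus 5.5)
We will prove the statement by the standard two-part scheme for the boundary of a branching graph: first show that extreme points of $\Phi_1(t)$ are multiplicative, then identify which multiplicative functionals are $t$-HL-positive.

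\emph{Step 1 (reduction to a branching graph).} By the Pieri rule for Hall--Littlewood functions, $p_1P_\mu(\ccdot;t)=\sum_{\la=\mu+\square}\psi_{\la/\mu}(t)P_\la(\ccdot;t)$ with coefficients $\psi_{\la/\mu}(t)>0$ for $t\in(-1,1)$ (they are products of factors $(1-t^{m})/(1-t)$). Hence a functional $\varphi\in\Phi_1(t)$ is the same thing as a nonnegative normalized harmonic function $\la\mapsto\varphi(P_\la)$ on the Young graph with these edge multiplicities. Proposition~\ref{prop2.A} then makes $\Phi_1(t)$ a Choquet simplex, so it suffices to describe its extreme points.

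\emph{Step 2 (multiplicativity).} We use the ring theorem (Kerov--Vershik): the cone $K\subset\Sym$ spanned by the $P_\la$ is closed under multiplication, because the structure constants $f^\nu_{\la\mu}(t)$ are nonnegative for $t\in[0,1)$. For $-1<t<0$ these constants can be negative, and this is the point that needs care. Here we would pass to the basis $P_\la$ and the identity $\Sym/(p_1-1)$, then argue as in Kerov's treatment of Jack and Macdonald graphs. If $\varphi$ is extreme and $a\in K$ is homogeneous with $\varphi(a)>0$, then $f\mapsto\varphi(af)/\varphi(a)$ again lies in $\Phi_1(t)$, and $\varphi$ is a convex combination of these functionals. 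Extremality then forces $\varphi(af)=\varphi(a)\varphi(f)$.

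\emph{Step 3 (classification).} A multiplicative $\varphi$ with $\varphi(p_1)=1$ is a specialization of $\Sym$. We need $t$-HL-positive specializations, i.e.\ specializations with $\varphi(P_\la)\ge0$ for all $\la$. Using the Cauchy identity $\sum_\la P_\la(x;t)Q_\la(y;t)=\prod_{i,j}\frac{1-tx_iy_j}{1-x_iy_j}$ and the duality between $P$ and $Q$, positivity on all $P_\la$ is equivalent to positivity of the generating function of the $Q$-dual one-row functions $q_n=Q_{(n)}$. Concretely, the sequence $\varphi(q_n)$ must be such that the Toeplitz-type minors arising from the Jacobi--Trudi type formulas are nonnegative. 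By Edrei's theorem on totally positive sequences, $\sum_n\varphi(q_n)z^n$ then has the form $e^{\gamma z}\prod(1+\be_jz)\prod(1-\al_iz)^{-1}$, modified by the $t$-transform $p_k\mapsto(1-t^k)p_k$.

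Translating back, the positive part of $\varphi(p_k)$ is $\sum\al_i^k+(-1)^{k-1}\sum\be_j^k/(1-t^k)$. A Plancherel-type component $\gamma$ would contribute only to $p_1$, and it is absorbed into the inequality $\sum\al_i+\frac1{1-t}\sum\be_j\le1$, with slack $\gamma$. The converse direction, that every such specialization is HL-positive and extreme, follows since the specializations of pure $\al$, pure $\be$ and Plancherel type are each HL-positive and products preserve positivity on $K$.

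The main obstacle is the necessity part of Step 3. Reducing $t$-HL-positivity to total positivity of a single sequence is not immediate, because the $P_\la$ are not given by a determinantal formula in one sequence. We would first try Kerov's approach: use the $P\to s$ transition to reduce to Schur-positivity of a deformed specialization, then apply the Thoma/Edrei theorem. An alternative is asymptotic analysis of the harmonic function via Vershik--Kerov ergodic approximation on $P_\la$ with large $\la$.
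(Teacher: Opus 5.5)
The paper does not prove this theorem itself. It quotes it from Matveev, and the only part it verifies (Section~\ref{sec2.4}, for $t\in(0,1)$) is the easy inclusion: the listed multiplicative functionals lie in $\Phi_1(t)$, obtained by mixing copies of $\varphi_\row$ and $\varphi_\col$ (Propositions~\ref{prop2.E}, \ref{prop2.C}) and passing to limits (Proposition~\ref{prop2.D}). Your converse paragraph follows that route, but your necessity argument in Step~3 rests on a false reduction.

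HL-positivity of a specialization is not equivalent to total positivity of the sequence $\varphi(q_n)$, $q_n=Q_{(n)}(\ccdot;t)$.

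For $t\in(0,1)$, take $\varphi_\row\in\Phi_1(t)$, the extreme point with $\al_1=1$. Since $\sum_n q_nu^n=\exp\bigl(\sum_k\frac{1-t^k}{k}p_ku^k\bigr)$, you get $\sum_n\varphi_\row(q_n)u^n=\frac{1-tu}{1-u}$. So $\varphi_\row(q_n)=1-t$ for $n\ge1$, and the Toeplitz minor $\varphi_\row(q_1)^2-\varphi_\row(q_2)=-t(1-t)$ is negative. Equivalently, the zero at $u=1/t>0$ is excluded by Edrei's form.

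For $t\in(-1,0)$ the equivalence fails the other way. The specialization $\varphi(p_k)=\frac1{1-t^k}$ gives $\sum_n\varphi(q_n)u^n=\frac1{1-u}$, which is totally positive, yet $\varphi(P_{(1,1)})=\varphi(e_2)=\frac{t}{(1-t)^2(1+t)}<0$.

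Your ``translating back'' step is inconsistent with this. If $\varphi\circ\theta_t$, with $\theta_t(p_k)=(1-t^k)p_k$, were of Thoma type, you would get $\varphi(p_k)=\frac1{1-t^k}\bigl(\sum_i\al_i^k+(-1)^{k-1}\sum_j\be_j^k\bigr)$. There the $\al$-part is also divided by $1-t^k$, so you wrote down the target formula rather than deriving it.

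The fallback through the $P\to s$ transition does not close the gap either. For $t\in[0,1)$, nonnegativity of the Kostka--Foulkes polynomials in $s_\la=\sum_\mu K_{\la\mu}(t)P_\mu(\ccdot;t)$ does make every HL-positive $\varphi$ Schur-positive. Thoma's theorem then yields $\varphi(p_k)=\sum_i a_i^k+(-1)^{k-1}\sum_j b_j^k$. But the actual content of the theorem is that the $b_j$ must form complete geometric progressions $\be,\be t,\be t^2,\dots$ (since $\frac{\be^k}{1-t^k}=\sum_{m\ge0}(\be t^m)^k$), with no such constraint on the $a_i$. Nothing in your sketch produces this. For $t<0$ even the first reduction breaks, e.g.\ $K_{(2),(1,1)}(t)=t$. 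This classification is precisely what the paper outsources to Matveev.

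Step~2 is also incomplete for $t\in(-1,0)$. The ring-theorem argument needs $\varphi(P_\mu P_\la)\ge0$ for all $\mu,\la$, i.e.\ nonnegativity of all $f^\la_{\mu\nu}(t)$, and Lemma~\ref{lem2.A} covers only $t\in(0,1)$. Saying ``argue as in Kerov's treatment'' supplies neither this nor a substitute. The extremality claim in your converse depends on Step~2 as well.

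The HL-positivity half of your converse, by contrast, needs nothing of this kind for any $t\in(-1,1)$. Convolving with a dilation of $\varphi_\row$ or $\varphi_\col$ only involves the row and column Pieri coefficients $f^\la_{\mu,(n)}(t)$ and $f^\la_{\mu,(1^n)}(t)$, which are manifestly positive on $(-1,1)$. So you should not invoke closedness of $K$ under products there.
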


This theorem is a special case of a more general result, which concerns the two-parameter family of bases of $\Sym$ consisting of the Macdonald symmetric functions; see Matveev \cite[Theorem~1.4 and Proposition~1.6]{Mat}. That result has a rich history; see \cite[Section~1]{Mat}.

\subsection{}

Let $q$ be a prime power, that is, $q:=p^m$, for some prime number $p$ and positive integer $m$. We denote by $\F_q$ the corresponding finite field of cardinality $|\F_q|=q$.

In~\cite{CO1}, we studied the set of measures on certain space of infinite matrices over $\F_q$ that are invariant with respect to the action of the group
$$
\GL(\infty,\F_q):=\varinjlim \GL(n,\F_q)=\bigcup_{n=1}^\infty \GL(n,\F_q),
$$
known as the \emph{coadjoint action}.\footnote{Our work was inspired by the papers of Vershik-Kerov~\cite{VK1},~\cite{VK2}, and Gorin-Kerov-Vershik~\cite{GKV}; see the introduction in~\cite{CO1}.}

We showed that the problem of describing these measures is reduced to the description of the functionals $\varphi\in\Phi_1(t)$ with $t=q^{-1}$. Because $q^{-1}\in(0,1)$, the solution is then provided by Theorem \ref{thm1.A}.

Next, also in~\cite{CO1}, we posed the analogous problem for the two twin infinite-dimensional unitary groups
\begin{equation}\label{eq1.B}
\U(2\infty,\F_{q^2}):=\varinjlim \U(2n,\F_{q^2}), \quad \U(2\infty+1,\F_{q^2}):=\varinjlim \U(2n+1,\F_{q^2}).
\end{equation}
It turned out that, again, describing the invariant measures for the coadjoint actions of these groups can be reduced to a problem about positive harmonic functionals, and that problem is related to the Hall-Littlewood symmetric functions. However, the very notions of positivity and harmonicity needed to be substantially modified. We proceed to the exact formulations.

\subsection{}\label{sec:1.4}

From now on, we assume that $t\in(0,1)$. We define the \emph{modified Hall-Littlewood symmetric functions} with parameter $-t$ by
\begin{equation}\label{eq1.A}
\wt P_\la(\ccdot;-t):=(-1)^{n(\la)}P_\la(\ccdot;-t),\text{ for all $\la\in\Y$, where $n(\la):=\sum_{i\ge 1}{(i-1)\la_i}$}.
\end{equation}

\begin{definition}[cf.~Definition~\ref{def1.A}]\label{def1.B}
Let $t\in(0,1)$ be fixed, and let $\psi:\Sym\to\R$ be a linear functional.

(i) We say that $\psi$ is \emph{$p_2$-harmonic} if $\psi(p_2f)=\psi(f)$, for any $f\in\Sym$.

(ii) We say that $\psi$ is \emph{$(-t)$-HL-positive} if $\psi(\wt P_\la(\ccdot;-t))\ge0$, for all $\la\in\Y$. 

(iii) We denote by $\Psi(-t)$ the convex cone formed by all linear functionals on $\Sym$ which are both $p_2$-harmonic and $(-t)$-HL-positive.
\end{definition}

As shown in our paper~\cite{CO1}, the problem of describing the set of coadjoint-invariant measures for the unitary groups \eqref{eq1.B} is reduced to the description of the cone $\Psi(-t)$ for the special values $t=q^{-1}$, where $q$ is an odd prime power. Thus, we come to the following problem.

\begin{problem}\label{problem1.A}
Study the convex cones $\Psi(-t)$, $t=q^{-1}$. In particular, describe their extreme rays.
\end{problem}

The direct sum decomposition $\Sym=\Sym_\even\oplus\Sym_\odd$ by parity of degree naturally leads to the decomposition
$$
\Psi(-t)=\Psi_\even(-t)\oplus\Psi_\odd(-t),
$$
which reduces our main problem to the subproblems of describing $\Psi_\even(-t)$ and $\Psi_\odd(-t)$.
We envision the definite solutions to resemble what Theorem~\ref{thm1.A} and Proposition~\ref{prop2.A} are to the problem of describing the cone $\Phi(t)$.
Unfortunately, we believe that this characterization of $\Phi(t)$, as proved in \cite{Mat}, cannot be easily replicated to describe $\Psi_\even(-t)$ and $\Psi_\odd(-t)$; see the heuristic argument in Section~\ref{sect7}.
Hence, it appears that new ideas are necessary.

\subsection{}

Our first main result is the description of explicit cone embeddings
$$
\Phi(t^2)\hookrightarrow\Psi_\even(-t),\qquad\Phi(t^2)\hookrightarrow\Psi_\odd(-t).
$$
Hence, the cones $\Psi_\even(-t)$ and $\Psi_\odd(-t)$ are at least as large as $\Phi(t^2)$, thus showing that Problem~\ref{problem1.A} has a nontrivial answer.
The precise embeddings are shown in Theorems~\ref{thm4.A} and \ref{thm4.B}.

The second main result is an adaptation of \emph{Kerov's mixing construction}, that is, in Theorem~\ref{thm5.A} and Corollary~\ref{cor5.A}, we find maps
\[
\Phi(t^2)\times\Psi_\even(-t)\times [0,1]\to\Psi_\even(-t),\qquad\Phi(t^2)\times\Psi_\odd(-t)\times [0,1]\to\Psi_\odd(-t)
\]
that can be used to construct new functionals in $\Psi_\even(-t)$ and $\Psi_\odd(-t)$ from old ones.
The mixing construction depends on the ``$p_2$-twisted action'' $\na$ of $\Sym$ on itself (see equation~\eqref{eq3.C}) and its dual map $\wt\De$.

Our third and final main result is Theorem~\ref{thm6.A} that proves a relationship between $\wt\De$ and the usual comultiplication map of $\Sym$.
This theorem describes the action of $\Sym$ on $\Sym^{\otimes 2}$ that makes $\wt\Delta:\Sym\to\Sym^{\otimes 2}$ (with the domain endowed with the $p_2$-twisted action) into a $\Sym$-module homomorphism.

\subsection{}

In Section~\ref{sect2}, we recall some generalities on branching graphs.
Next, in Section~\ref{sect3}, we turn our focus to the branching graphs $\YHL_\even(-t)$, $\YHL_\odd(-t)$, and deduce from a recent result of Shen and Van Peski~\cite{SVP1} that the structure constants of certain $\Sym$-modules are nonnegative; this fact will be used in the proofs of our main theorems in the following two sections.
Section~\ref{sect4} proves our first main result, split into Theorem~\ref{thm4.A} and \ref{thm4.B}: both cones $\Psi_{\even}(-t)$ and $\Psi_{\odd}(-t)$ are at least as large as $\Phi(t^2)$.
As it turns out, these results are limiting cases of an adaptation of Kerov's mixing construction, described in Section~\ref{sect5}, that produces new functionals in $\Psi(-t)$ from old ones; the precise statement is Theorem~\ref{thm5.A} and constitutes our second main result.
Section~\ref{sect6} contains our third main result in Theorem~\ref{thm6.A} and explains its relationship to our note~\cite{CO2} and to van Leeuwen's work~\cite{L}.
We conclude with some remarks in Section~\ref{sect7}.

\subsection{Acknowledgements}

The authors are grateful to Jiahe Shen and Roger Van Peski for helpful discussions.
C.C. was supported by the NSF grant DMS-2348139 and the Simons Foundation’s Travel Support for Mathematicians grant MP-TSM-00006777.
G.O. was supported by the Ministry of Science and Higher Education of the Russian Federation GZ project.

\section{Preliminaries: branching graphs and Kerov's mixing construction}\label{sect2}

\subsection{}

We start with a general formalism (see, e.g., \cite[\S2]{BO}, \cite[Ch.~1]{Ke2} and references therein).

\begin{definition}\label{def2.B}
A \emph{branching graph} is a $\Z_{\ge0}$-graded, connected, rooted graph $\Ga=(V,E)$ without pending vertices, and where each edge $e\in E$ is endowed with a weight $w(e)$, which is a strictly positive real number.
\end{definition}

In more detail, the vertex set $V$ is partitioned into levels indexed by the nonnegative integers: $V=V_0\sqcup V_1\sqcup V_2\sqcup\cdots$. The $0$-th level $V_0$ consists of a single vertex, denoted by $\varnothing$, that serves as the root of $\Ga$. The edges join vertices of adjacent levels only. Moreover, each vertex $v\in V_n$, $n>0$, is joined with at least one vertex from $V_{n-1}$ and at least one vertex from $V_{n+1}$.

\begin{definition}\label{def2.C}
Let $\Ga=(V,E)$ be a branching graph. We denote by $\Phi(\Ga)$ the set of functions $\varphi: V\to\R_{\ge0}$, which are \emph{harmonic} in the sense that for any $n\in\Z_{\ge0}$ and any vertex $v\in V_n$, one has
\begin{equation}\label{eq2.G}
\varphi(v)=\sum_{v'\in V_{n+1}: \,(v,v')\in E} w(v,v') \varphi(v').
\end{equation}
Next, we denote by $\Phi_1(\Ga)$ the subset of functions $\varphi\in\Phi(\Ga)$ satisfying the additional normalization condition $\varphi(\varnothing)=1$.
\end{definition}

Evidently, $\Phi(\Ga)$ is a convex cone and $\Phi_1(\Ga)$ is a convex set. We denote by $\ex(\Phi_1(\Ga))$ the set of extreme points of $\Phi_1(\Ga)$.

\begin{proposition}\label{prop2.A}
Let, as above, $\Ga=(V,E)$ be a branching graph. We additionally assume that all levels $V_n$ are finite sets.

{\rm(i)} The set $\Phi_1(\Ga)$ is nonempty and it serves as a base of the cone $\Phi(\Ga)$.

{\rm(ii)} The subset $\ex(\Phi_1(\Ga))\subset \Phi_1(\Ga)$ has a natural Borel structure.

{\rm(iii)} There is a natural isomorphism of convex sets between $\Phi_1(\Ga)$ and the set of probability Borel measures on $\ex(\Phi_1(\Ga))$.

{\rm(iv)} Likewise, the cone $\Phi(\Ga)$ is isomorphic to the cone of finite Borel measures on $\ex(\Phi_1(\Ga))$.
\end{proposition}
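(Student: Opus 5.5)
We plan to realize $\Phi(\Ga)$ as a closed subset of a compact metrizable space and then apply Choquet theory. The key steps, in order, are as follows.

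\textbf{Step 1: nonemptiness.} Since there are no pending vertices, every $v\in V_n$ has an up-neighbor. To produce a harmonic function we use a compactness (K\"onig-type) argument. For each $N$, define $\varphi_N$ on $V_0\sqcup\dots\sqcup V_N$ by choosing any vertex $u\in V_N$ reachable from $\varnothing$, setting $\varphi_N(u)=1$ and $\varphi_N=0$ at the other vertices of $V_N$, and propagating downward via \eqref{eq2.G}. This gives $\varphi_N(\varnothing)>0$, since $u$ is connected downward to the root through edges of positive weight. We then normalize so that $\varphi_N(\varnothing)=1$.

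\textbf{Step 2: uniform bounds and a limit.} Set $\dim(v)$ equal to the sum over downward paths from $\varnothing$ to $v$ of the products of the edge weights; this is positive because each vertex has a down-neighbor. Iterating \eqref{eq2.G} gives
\[
1=\varphi(\varnothing)=\sum_{v\in V_n}\dim(v)\,\varphi(v)\quad\text{for every } n\le N,
\]
so $0\le\varphi(v)\le 1/\dim(v)$. By Tychonoff (using that each $V_n$ is finite), a subsequence of the $\varphi_N$ converges pointwise. Harmonicity passes to the limit because each sum in \eqref{eq2.G} is finite. This proves $\Phi_1(\Ga)\neq\emptyset$. That $\Phi_1(\Ga)$ is a base of $\Phi(\Ga)$ is immediate: $\varphi(\varnothing)=0$ forces $\varphi\equiv0$ by the identity above, and otherwise $\varphi/\varphi(\varnothing)\in\Phi_1(\Ga)$. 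This gives (i).

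\textbf{Step 3: compactness and (ii).} The same bound shows that $\Phi_1(\Ga)$ is a compact convex subset of the product space $\prod_{v}[0,1/\dim(v)]\subset\R^V$. Since $V$ is countable, this space is metrizable. The extreme points of a compact convex metrizable set form a $G_\delta$, so $\ex(\Phi_1(\Ga))$ inherits a Borel structure; this is (ii).

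\textbf{Step 4: the simplex property, (iii) and (iv).} By Choquet's theorem, every $\varphi\in\Phi_1(\Ga)$ is the barycenter of some probability measure on $\ex(\Phi_1(\Ga))$. Conversely, every such measure produces an element of $\Phi_1(\Ga)$ by integration; harmonicity is preserved since each sum in \eqref{eq2.G} is finite. The main obstacle is uniqueness of the representing measure, i.e.\ showing that $\Phi_1(\Ga)$ is a Choquet simplex.

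For uniqueness we would use the standard argument that $\Phi(\Ga)$ is a lattice cone, which by Choquet--Meyer is equivalent to the simplex property. Concretely, we realize $\Phi(\Ga)$ as the projective limit of the simplicial cones $\R_{\ge0}^{V_N}$ under the linear maps given by \eqref{eq2.G}. A harmonic $\varphi$ is the same as a compatible family of finite measures $M_N(v)=\dim(v)\varphi(v)$ on the levels $V_N$, linked by the nonnegative stochastic matrices of the down transitions. A projective limit of simplices under affine surjections is a simplex; equivalently, one defines $\varphi\wedge\psi$ as the limit of the decreasing levelwise minima.

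Alternatively, following \cite[\S2]{BO}, we would obtain uniqueness via the ergodic method: the extreme points are exactly the limits of the normalized functions $\dim(v,u_N)/\dim(u_N)$ along regular paths, and the decomposition measure of $\varphi$ is the image of the path measure defined by $\varphi$. We would take this approach if the lattice-cone verification becomes messy.

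Finally, (iv) follows from (iii) by scaling each $\varphi\in\Phi(\Ga)$ by $\varphi(\varnothing)$, using (i).
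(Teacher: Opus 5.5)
Your argument is correct, and its core matches the paper's proof. The paper also passes to the measures $d(v)\varphi(v)$ on the finite levels (your $M_N$) and identifies $\Phi_1(\Ga)$ with the projective limit $\varprojlim(\triangle_n,\pi_n)$ of finite simplices under the down-transition maps. It gets nonemptiness from the fact that a projective limit of nonempty compact sets is nonempty. It then obtains (ii)--(iv) at once by citing the theorem (Olshanski, Winkler) that such a projective limit is a Choquet simplex.

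Your route differs in three places:
\begin{itemize}
\item You unpack the compactness by hand, using finite-level approximants plus Tychonoff.
\item You get (ii) and the existence half of (iii) from general Choquet theory for the compact metrizable set $\Phi_1(\Ga)\subset\R^V$.
\item You isolate uniqueness as the only real issue and settle it with the levelwise-minimum construction. Pushing $\min(\varphi,\psi)|_{V_N}$ down via \eqref{eq2.G} gives functions that decrease in $N$, since $\sum w\min\le\min\sum w$. Their limit is harmonic and is the infimum in the cone order, and Choquet--Meyer converts this lattice property into uniqueness of the representing measure on $\ex(\Phi_1(\Ga))$.
\end{itemize}
Your approach is longer but self-contained and elementary, whereas the paper's is a short reduction to a cited structural theorem.

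One correction: do not invoke the projective-limit theorem in the form ``under affine surjections''. The maps $\pi_n$ are typically not surjective. Already in the Young graph, the vertex $\delta_{(3,1)}$ of $\triangle_4$ is not in $\pi_5(\triangle_5)$. Being an extreme point, it could only be hit by a diagram with five boxes whose only predecessor is $(3,1)$, i.e.\ a five-box rectangle containing $(3,1)$, and none exists. So you need the version of the theorem without surjectivity, which is what the paper cites. Your lattice argument avoids the issue entirely, so rely on it for uniqueness.
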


\begin{proof}
For each vertex $v\in V_n$, $n>0$, there exists a monotone path in the graph, joining $v$ with the root, that is, a sequence $(v_0,v_1,\dots,v_n)$, where $v_i\in V_i$, $(v_i, v_{i+1})\in E$, $v_0=\varnothing$, and $v_n=v$. 
Assume that $\varphi\in\Phi(\Ga)$ is such that $\varphi(\varnothing)=0$. Using induction on $n$, we deduce from \eqref{eq2.G} that $\varphi(v)=0$, for all $v\in V_n$, and all $n>0$. 
This proves that $\Phi_1(\Ga)$ serves as a base of the cone $\Phi(\Ga)$ provided we already know that $\Phi_1(\Ga)$ is nonempty. 

We are going to show that $\Phi_1(\Ga)$ can be represented as a projective limit of certain finite-dimensional simplices $\triangle_n$. It will imply that $\Phi_1(\Ga)$ is nonempty (since any projective limit of compact sets is nonempty). Further, the claims (ii)-(iv) will follow from Choquet's theorem: see \cite[Theorem~9.2]{Ols1} or Winkler \cite[Theorem~3.2.3]{W}.

We introduce a function $d:V\to\R_{>0}$ as follows: $d(\varnothing):=1$, and for $v\ne\varnothing$, we define $d(v)$ as a weighted sum over all monotone paths from $\varnothing$ to $v$, where the weight of a path is the product of the weights of the edges constituting the path. Then the basic equations \eqref{eq2.G} can be rewritten as follows:
\begin{equation}\label{eq2.H}
d(v)\varphi(v)=\sum_{v'\in V_{n+1}: \,(v,v')\in E}\frac{d(v)w(v,v')}{d(v')} d(v')\varphi(v'), \quad v\in V_n, \; n\ge0.
\end{equation}

The key observation is that for any $v'\in V_{n+1}$, where $n=0,1, 2,\dots$, one has
\begin{equation}\label{eq2.I}
\sum_{v\in V_n: \,(v,v')\in E}\frac{d(v)w(v,v')}{d(v')}=1.
\end{equation}
From \eqref{eq2.I} and \eqref{eq2.H} it follows that for any $\varphi\in\Phi(\Ga)$, the sum 
$
\sum\limits_{v\in V_n} d(v)\varphi(v)
$
is independent of $n$. Hence, in particular, if $\varphi\in\Phi_1(\Ga)$, then this sum is equal to $d(\varnothing)\varphi(\varnothing)=1$.

For $n=1,2,\dots$, we define $\triangle_n$ as the abstract simplex with vertices $v\in V_n$: this means that points $x_n\in\triangle_n$ are formal convex combinations of the form
$$
x_n=\sum_{v\in V_n} c(v) v,\text{ where $c(v)\ge 0$ and $\sum_{v\in V_n}c(v)=1$}. 
$$
By virtue of \eqref{eq2.I}, for each $n=1,2,\dots$, there exists an affine map $\pi_n:\triangle_n\to\triangle_{n-1}$ such that  
$$
\pi_n(v')=\sum_{v \in V_{n-1}: \,(v,v')\in E} \frac{d(v)w(v,v')}{d(v')}v,\,\text{ for all }v'\in V_n.
$$

Now, let $\varphi\in\Phi_1(\Ga)$ be arbitrary. Given $n=1,2,\dots$, we assign to $\varphi$ an infinite sequence
$$
(x_1,x_2,\dots)\in \triangle_1\times\triangle _2\times\dots
$$ 
by setting 
$$
x_n:=\sum_{v \in V_n}d(v)\varphi(v)v
$$
(note that $x_n$ is in $\triangle_n$ because $\sum\limits_{v \in V_n}d(v)\varphi(v)=1$). 

The relations \eqref{eq2.H} precisely mean that $x_{n-1}=\pi_n(x_n)$ for each $n$, therefore $(x_1, x_2,\dots)$ is an element of the projective limit space $\varprojlim(\triangle_n,\pi_n)$.  And vice versa, any element of $\varprojlim(\triangle_n,\pi_n)$ comes from a (unique) function $\varphi\in\Phi_1(\Ga)$. 

This gives us the desired bijection $\Phi_1(\Ga) \leftrightarrow \varprojlim(\triangle_n,\pi_n)$. 
\end{proof}

\subsection{}

We use the notation of Macdonald~\cite[Ch. III, \S3]{Mac} for the structure constants $f^\la_{\mu\nu}(t)$ of the algebra $\Sym$ in the HL basis $\{P_\la(\ccdot;t)\colon\la\in\Y\}$:
\begin{equation}\label{eq2.D}
P_\mu(\ccdot;t)P_\nu(\ccdot;t)=\sum_{\la:\, |\la|=|\mu|+|\nu|}f^\la_{\mu\nu}(t)P_\la(\ccdot;t).
\end{equation}

For $\mu,\la\in\Y$, we write $\mu\nearrow\la$ if $\mu\subset\la$ and $|\la|=|\mu|+1$, that is, if $\la$ is obtained from $\mu$ by adding a box. In the special case of \eqref{eq2.D} corresponding to $\nu=(1)$, we have that $P_{(1)}(\ccdot;t)=p_1$, and therefore \eqref{eq2.D} can be written in the form
\begin{equation}\label{eq2.E}
p_1\cdot P_\mu(\ccdot;t)=\sum_{\la:\, \mu\nearrow\la} w(\mu\nearrow\la;t)P_\la(\ccdot;t),
\end{equation}
where, denoting by $j$ the column number of the box $\la/\mu$, 
\begin{equation}\label{eq2.F}
w(\mu\nearrow\la;t):=f^\la_{\mu(1)}(t)=\frac{1-t^{\la'_j-\la'_{j+1}}}{1-t},
\end{equation}
see \cite[Ch. III, (3.2)]{Mac}. 

\begin{definition}\label{def2.D}
Let $t\in(-1,1)$. The \emph{HL-deformed Young graph with parameter $t$}, denoted by $\Y^\HL(t)$, is the branching graph  $(V,E)$ with the vertex set $V=\Y$ and its natural grading by size $\Y=\Y_0\sqcup\Y_1\sqcup\Y_2\sqcup\dots$; further,  the edge set $E$ is formed by the pairs $\mu\nearrow\la$, and the edge weights are the quantities $w(\mu\nearrow\la;t)$ given by \eqref{eq2.F}.
\end{definition}

This definition is a special case of a branching graph, in the sense of Definition~\ref{def2.B}. Indeed, the underlying graph is the conventional Young graph (with vertex set $\Y$ and edges $\mu\nearrow\la$), which is evidently connected and has no pending vertices, while the edge weights are strictly positive, as seen from \eqref{eq2.F}.
As a result, Proposition~\ref{prop2.A} is applicable to $\Y^\HL(t)$.

We note that in our previous paper~\cite{CO1}, the branching graph $\Y^\HL(t)$ is defined somewhat differently, in terms of the HL functions $Q_\la(\ccdot;t)$ instead of the $P_\la(\ccdot;t)$'s (see \cite[(4.5) and Definition 4.7]{CO1}). 
However, this does not affect the definition of the cone $\Phi(t)$, because $Q_\la(\ccdot;t)=b_\la(t)P_\la(\ccdot;t)$, where the factor $b_\la(t)$ is strictly positive for any $\la\in\Y$: indeed, by writing $\la=(1^{m_1(\la)}2^{m_2(\la)}\dots)$, one has
\begin{equation}\label{eq2.B}
b_\la(t)=\prod_{i\ge1} (t;t)_{m_i(\la)},\text{ where $(t;t)_m:=(1-t)(1-t^2)\cdots(1-t^m)$},
\end{equation}
see~\cite[Ch.~III, (2.12)]{Mac}, showing that all factors are strictly positive whenever $t\in(-1,1)$.

\smallskip

Finally, from the comparison between Definition~\ref{def2.C} and Definition~\ref{def1.A}, it is clear that under the identification $\varphi(P_\la(\ccdot;t))=\varphi(\la)$, we have $\Phi(\Y^\HL(t))=\Phi(t)$ and $\Phi_1(\Y^\HL(t))=\Phi_1(t)$.
Hence, by virtue of Proposition~\ref{prop2.A}, the convex cone $\Phi(t)$ is isomorphic to the cone of finite Borel measures on $\ex(\Phi_1(t))$, and since Theorem~\ref{thm1.A} completely describes the set $\ex(\Phi_1(t))$, then $\Phi(t)$ is also completely described, for all $t\in(-1,1)$.

\subsection{}\label{sec2.3}

Recall the \emph{Cauchy identity} for HL functions \cite[Ch.~III, (4.4)]{Mac}: 
\begin{equation}\label{eq2.J}
\sum_{\la\in\Y} P_\la(x_1,x_2,\dots;t)Q_\la(y_1,y_2,\dots;t)=\prod_{i,j\ge 1}\dfrac{1-tx_iy_j}{1-x_iy_j}.
\end{equation}

We denote by $\De:\Sym\to\Sym^{\otimes2}$ the standard comultiplication map (\cite[Ch.~I, \S5, Ex.~25]{Mac}). From the multiplicativity of the right-hand side of \eqref{eq2.J}, it follows that the coefficients $f^\la_{\mu\nu}(t)$ serve also as the structure constants of $\De$ in the basis $\{Q_\la(\ccdot;t)\colon\la\in\Y\}$:
\begin{equation}\label{eq2.L}
\De\big(Q_\la(\ccdot;t)\big) = \sum_{\mu,\nu:\,|\mu|+|\nu|=|\la|}f^\la_{\mu\nu}(t)\, Q_\mu(\ccdot;t)\otimes Q_\nu(\ccdot;t).
\end{equation}

Given two linear functionals, $\varphi$ and $\psi$, on the algebra $\Sym$, we can form another linear functional, denoted by  $\phi*\psi$, as follows:
\begin{equation}\label{eq2.C}
(\phi*\psi)(a):=(\phi\otimes\psi)(\De(a)), \quad a\in\Sym.
\end{equation}

In another direction, given a linear functional $\varphi:\Sym\to\R$ and a real number $r\ge0$, we can define a new linear functional $\varphi_r$, called the \emph{dilation} of $\varphi$ with parameter $r$, by declaring its values on homogeneous elements $a\in\Sym$ to be
\begin{equation}\label{eq2.M}
\varphi_r(a):=\varphi(a)r^{\deg a}, 
\end{equation}
with the convention that $0^0:=1$.

\begin{definition}\label{def2.A}
Let $\varphi$ and $\psi$ be two linear functionals on $\Sym$. Their \emph{mixing with parameters $(r,s)$}, where $r\ge0$, $s\ge0$, $r+s=1$, is  the linear functional $\varphi_r*\psi_s$. 
\end{definition}

In more detail, the value of $\varphi_r*\psi_s$ on a homogeneous element $a\in\Sym$ is given by the following formula. Write any decomposition $\De(a)=\sum_i a'_i\otimes a''_i$, where $a'_i$ and $a''_i$ are homogeneous. Then
\begin{equation}\label{eq2.A}
(\varphi_r*\psi_s)(a)=\sum_i \varphi(a'_i)\psi(a''_i)r^{\deg a'_i}s^{\deg a''_i}.
\end{equation}

\begin{lemma}\label{lem2.A}
If $t\in(0,1)$, then $f^\la_{\mu\nu}(t)\ge0$, for all $\la,\mu,\nu$.
\end{lemma}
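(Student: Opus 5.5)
The nonnegativity of $f^\la_{\mu\nu}(t)$ for $t\in(0,1)$ is a classical fact, and the plan is to derive it from the Hall algebra interpretation in Macdonald, Ch.~II--III. Let $\oo$ be a complete discrete valuation ring with finite residue field of cardinality $q$. The Hall polynomial $g^\la_{\mu\nu}(q)$ counts submodules $N$ of a finite $\oo$-module $M$ of type $\la$ such that $N$ has type $\nu$ and $M/N$ has type $\mu$. Macdonald, Ch.~III, (3.3), gives
\[
f^\la_{\mu\nu}(t)=t^{\,n(\la)-n(\mu)-n(\nu)}\, g^\la_{\mu\nu}(t^{-1}).
\]
At $t=q^{-1}$ this is a nonnegative integer times a positive power of $q$, hence nonnegative. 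This settles the values $t=q^{-1}$ with $q$ a prime power. To reach all $t\in(0,1)$, the counting interpretation alone does not suffice.

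For general $t$, the plan is to use Macdonald's explicit combinatorial formula for $f^\la_{\mu\nu}(t)$ (Ch.~III, (3.13) and the discussion after it). This formula writes $f^\la_{\mu\nu}(t)$ as a sum over LR-sequences (sequences of partitions $\mu=\la^{(0)}\subset\la^{(1)}\subset\dots\subset\la^{(r)}=\la$ with horizontal-strip steps, of the type counted by the Littlewood--Richardson rule). Each summand is a product of factors of the form
\[
f^{\la^{(i)}}_{\la^{(i-1)}(1^{m})}(t)=t^{\,n(\la^{(i)})-n(\la^{(i-1)})-n(1^m)}\prod_{j\ge1}\begin{bmatrix}\la^{(i)\prime}_j-\la^{(i)\prime}_{j+1}\\ \la^{(i)\prime}_j-\la^{(i-1)\prime}_j\end{bmatrix}_{t^{-1}}
\]
(Ch.~III, (3.2)). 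Each Gaussian binomial here is a polynomial in $t^{-1}$ with nonnegative integer coefficients, and the prefactor is a power of $t$. So each factor is manifestly nonnegative for $t>0$.

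The main obstacle is the exact form of the reduction in the second step. Expressing $P_\nu$ through products of $P_{(1^m)}=e_m$ involves an inverse transition matrix, which a priori carries signs. What makes the argument work is that Macdonald's formula (3.13) already incorporates this inversion and is a positive sum.

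If that reference turns out to be insufficiently explicit, I would use a fallback. The Hall polynomial $g^\la_{\mu\nu}(X)$ is a polynomial in $X$. Macdonald, Ch.~II, (4.3) and the remarks in Ch.~II, \S4, express it via LR-sequences as a sum of products of polynomials in $X-1$ with nonnegative coefficients; this positivity in $X-1$ is due to Macdonald and Klein. Substituting $X=t^{-1}>1$ then gives $g^\la_{\mu\nu}(t^{-1})\ge0$, and by the relation above $f^\la_{\mu\nu}(t)\ge0$ for every $t\in(0,1)$.
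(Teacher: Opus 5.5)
\textbf{There is a genuine gap: your argument for general $t\in(0,1)$ rests on a formula that cannot exist.} Your Step 1 (Hall polynomials at $t=q^{-1}$) is correct, and it is the same remark the paper makes for that special case. The trouble is Step 2.

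By Macdonald's Ch.~III, (3.2), each vertical-strip coefficient $f^{\la}_{\mu(1^m)}(t)=\prod_j\begin{bmatrix}\la'_j-\la'_{j+1}\\ \la'_j-\mu'_j\end{bmatrix}_t$ lies in $\Z_{\ge0}[t]$. Hence any sum of products of such coefficients also lies in $\Z_{\ge0}[t]$. But $f^{(3,1)}_{(2)(2)}(t)=1-t$. Indeed $P_{(2)}=e_1^2-(1+t)e_2$, and the Pieri rules give $P_{(2)}^2=P_{(4)}+(1-t)P_{(3,1)}+(1+t)P_{(2,2)}$. Equivalently $g^{(3,1)}_{(2)(2)}(q)=q-1$, which a direct count of cyclic submodules of order $q^2$ with cyclic quotient in a module of type $(3,1)$ confirms.

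So the sign problem you flagged as the ``main obstacle'' is real, and no formula of that shape absorbs it. What the LR-sequence refinement of Ch.~II, \S4 gives is $g^\la_{\mu\nu}=\sum_S g_S$, and $g_S$ is not a product of elementary-submodule counts. For $S=((2),(2,1),(3,1))$, there are $q\cdot 1$ chains $N_1\subset N$ with rank-one steps and the prescribed cotypes. In one of them $N$ is elementary, so $N_1\neq\mathfrak{p}N$, which leaves $g_S=q-1$. Incidentally, your displayed factor is the Hall polynomial $g^{\la^{(i)}}_{\la^{(i-1)}(1^m)}(t)$, not $f$: for $\mu=(2)$, $\la=(2,1)$ it gives $t$, whereas $f^{(2,1)}_{(2)(1)}(t)=1$.

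\textbf{The fallback then carries the whole lemma, and as written it is only an attribution.} Nonnegativity of $g^\la_{\mu\nu}(X)$ for all real $X>1$ is just a restatement of the lemma. Nonnegativity at prime powers does not extend to all real $X>1$ by itself. The existence theory of Hall polynomials gives integrality, degree $n(\la)-n(\mu)-n(\nu)$, and leading coefficient equal to the Littlewood--Richardson coefficient $c^\la_{\mu\nu}$. None of that implies positivity in $X-1$. You therefore need to point to a precise theorem establishing that positivity. If you can, the fallback is a legitimate alternative citation, and it proves more than is needed.

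The paper closes this step differently. It cites Schwer, Ram, or Yip, whose folded-gallery and alcove-walk expansions write $f^\la_{\mu\nu}(t)$, for arbitrary $t$, as sums of weights that are manifestly nonnegative when $t\in(0,1)$.
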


This follows from Schwer~\cite[Theorem 1.3]{S}, Ram~\cite[Theorem 4.9]{R}, or Yip~\cite[Theorem 4.13]{Y}, each of which expresses the quantities $f^\la_{\mu\nu}(t)$ as weighted sums indexed by various combinatorial objects with manifestly nonnegative weights, whenever $t\in(0,1)$.

We also point out that in the case $t=q^{-1}$, the lemma follows from \cite[Ch.~III, (3.5)]{Mac}, which shows that $q^{n(\la)-n(\mu)-n(\nu)}f^\la_{\mu\nu}(q^{-1})$ is the solution to an enumerative problem and therefore a nonnegative integer.

\medskip

Recall that $\Phi_1(t)$ denotes the set of linear functionals $\varphi:\Sym\to\R$ which are $p_1$-harmonic, $t$-HL-positive and normalized by the condition $\varphi(1)=1$.

\begin{proposition}\label{prop2.B}
Let $t\in(0,1)$ and let $r,s$ be nonnegative real numbers with $r+s=1$. If $\varphi,\psi\in\Phi_1(t)$, then $\varphi_r*\psi_s\in\Phi_1(t)$.
\end{proposition}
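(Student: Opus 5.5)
We have to check three properties of $\varphi_r*\psi_s$: the normalization $(\varphi_r*\psi_s)(1)=1$, $t$-HL-positivity, and $p_1$-harmonicity.

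\emph{Normalization.} This is immediate. We have $\De(1)=1\otimes1$, both elements have degree $0$, and $0^0=1$, so by \eqref{eq2.A} the value at $1$ is $\varphi(1)\psi(1)=1$.

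\emph{Harmonicity.} The key structural fact is that $p_1$ is primitive: $\De(p_1)=p_1\otimes1+1\otimes p_1$. Since $\De$ is an algebra homomorphism, for $a\in\Sym$ with $\De(a)=\sum_i a'_i\otimes a''_i$ (all homogeneous) we get
$$
\De(p_1a)=\sum_i (p_1a'_i)\otimes a''_i+\sum_i a'_i\otimes (p_1 a''_i).
$$
Applying \eqref{eq2.A} and then harmonicity of $\varphi$ and $\psi$, the two sums contribute $r\cdot\sum_i\varphi(a'_i)\psi(a''_i)r^{\deg a'_i}s^{\deg a''_i}$ and $s\cdot(\text{same})$ respectively. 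The extra factor $r$ or $s$ comes from the raised degree. The total is $(r+s)(\varphi_r*\psi_s)(a)=(\varphi_r*\psi_s)(a)$. It suffices to do this for homogeneous $a$, which gives it for all $a$ by linearity.

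\emph{Positivity.} Here we need $(\varphi_r*\psi_s)(P_\la)\ge0$. The natural tool is \eqref{eq2.L}. Since $Q_\la=b_\la(t)P_\la$ with $b_\la(t)>0$, it is equivalent to show $(\varphi_r*\psi_s)(Q_\la)\ge0$. By \eqref{eq2.L},
$$
(\varphi_r*\psi_s)(Q_\la)=\sum_{\mu,\nu} f^\la_{\mu\nu}(t)\,b_\mu(t)b_\nu(t)\,\varphi(P_\mu)\psi(P_\nu)\,r^{|\mu|}s^{|\nu|}.
$$
Every factor is nonnegative: $f^\la_{\mu\nu}(t)\ge0$ by Lemma~\ref{lem2.A} (this is where $t\in(0,1)$ is used), $b_\mu,b_\nu>0$ by \eqref{eq2.B}, $\varphi(P_\mu),\psi(P_\nu)\ge0$ by HL-positivity, and $r,s\ge0$.

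The only nontrivial input is the positivity of the structure constants, which is supplied by Lemma~\ref{lem2.A}. The rest is bookkeeping: one should check that the convention $0^0=1$ handles the boundary cases $r=0$ or $s=0$ consistently in the harmonicity computation. This works because the identity $(p_1a')$ having degree $\deg a'+1$ gives the factor $r^{\deg a'+1}=r\cdot r^{\deg a'}$, which holds for $r=0$ as well.
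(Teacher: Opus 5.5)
Your proposal is correct and follows essentially the same route as the paper's proof. The paper likewise checks normalization via $\De(1)=1\otimes1$, proves $p_1$-harmonicity using the primitivity of $p_1$ together with $r+s=1$, and gets positivity by passing to the $Q_\la(\ccdot;t)$ basis and applying \eqref{eq2.L} with Lemma~\ref{lem2.A}.
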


\begin{proof}
(i) Let us check that $\varphi_r*\psi_s$ is $p_1$-harmonic, that is,
\begin{equation}\label{eq:p1_harmonic}
(\varphi_r*\psi_s)(p_1a)=(\varphi_r*\psi_s)(a),\text{ for any }a\in\Sym.
\end{equation}
Without loss of generality we may assume that $a$ is homogeneous. Then we can write $\De(a)=\sum a'_i\otimes a''_i$, where all $a'_i$ and $a''_i$ are homogeneous, too. By definition, we then have that the right-hand side of \eqref{eq:p1_harmonic} is equal to
$$
(\varphi_r*\psi_s)(a)=\sum \varphi(a'_i)\psi(a''_i)r^{\deg a'_i}s^{\deg a''_i}.
$$
Likewise, using the fact that $\De$ is an algebra morphism, that $\varphi$ and $\psi$ are $p_1$-harmonic, and $\De(p_1)=p_1\otimes1+1\otimes p_1$, we obtain that the left-hand side of \eqref{eq:p1_harmonic} equals
\begin{gather*}
(\varphi_r*\psi_s)(p_1a)=\sum \varphi(a'_i)\psi(a''_i)(r^{1+\deg a'_i}s^{\deg a''_i}+r^{\deg a'_i}s^{1+\deg a''_i})\\
=(r+s)\sum \varphi(a'_i)\psi(a''_i)r^{\deg a'_i}s^{\deg a''_i}.
\end{gather*}
Because $r+s=1$, the equality \eqref{eq:p1_harmonic} follows.

(ii) Let us now check the nonnegativity condition. Recall that $Q_\la(\ccdot;t)=b_\la(t)P_\la(\ccdot;t)$ and that $b_\la(t)>0$, for all $\la\in\Y$ and $t\in(0,1)$ (see equation \eqref{eq2.B} above). Therefore, to examine nonnegativity, we may freely switch from the basis $\{P_\la(\ccdot;t) \colon \la\in\Y\}$ to the basis $\{Q_\la(\ccdot;t) \colon \la\in\Y\}$. From \eqref{eq2.L}, we have
$$
(\varphi_r*\psi_s)\big(Q_\la(\ccdot;t)\big) = \sum_{\mu,\nu:\,|\mu|+|\nu|=|\la|}f^\la_{\mu\nu}(t) \varphi\big(Q_\mu(\ccdot;t)\big)\psi\big(Q_\nu(\ccdot;t)\big)r^{|\mu|}s^{|\nu|}.
$$
All the factors in the right-hand side are nonnegative, due to Lemma~\ref{lem2.A} and the fact that both $\varphi$ and $\psi$ are $t$-HL-positive, so the final sum is also nonnegative, as desired.

(iii) Finally, because $\De(1)=1\otimes1$, we have $(\varphi_r*\psi_s)(1)=1$.
\end{proof}

The previous proposition can be generalized to an arbitrary number of linear functionals; let us begin with the following consequence of the associativity and commutativity of $\Delta$.

\begin{lemma}\label{lem2.B}
The binary operation $(\phi*\psi)(a)=(\phi\otimes\psi)(\De(a))$, defined in equation \eqref{eq2.C}, on the space $\Sym'$ of linear functionals $\Sym\to\R$ is both associative and commutative. 
Therefore $\Sym'$, with $*$ serving as the multiplication, is a commutative ring.
\end{lemma}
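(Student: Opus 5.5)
The plan is to derive the statement directly from the Hopf-algebraic properties of the comultiplication $\De$. Write $\Sym'$ for the algebraic dual, and recall that for functionals $\phi_1,\phi_2$ the tensor product $\phi_1\otimes\phi_2$ is the functional on $\Sym^{\otimes2}$ given by $a\otimes b\mapsto\phi_1(a)\phi_2(b)$. Bilinearity of $*$ is immediate from the definition \eqref{eq2.C}, so the additive group structure on $\Sym'$ together with distributivity come for free. It remains to check associativity, commutativity, and the existence of a unit.

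\emph{Associativity.} For $\phi,\psi,\chi\in\Sym'$ and $a\in\Sym$, unwinding the definitions gives
\[
((\phi*\psi)*\chi)(a)=(\phi\otimes\psi\otimes\chi)\big((\De\otimes\mathrm{id})\De(a)\big)
\]
and
\[
(\phi*(\psi*\chi))(a)=(\phi\otimes\psi\otimes\chi)\big((\mathrm{id}\otimes\De)\De(a)\big).
\]
To justify the first identity, write $\De(a)=\sum_i a'_i\otimes a''_i$; then $((\phi*\psi)*\chi)(a)=\sum_i(\phi\otimes\psi)(\De(a'_i))\,\chi(a''_i)$, which is the right-hand side. So it suffices to know coassociativity, $(\De\otimes\mathrm{id})\De=(\mathrm{id}\otimes\De)\De$. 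Since both sides are algebra morphisms $\Sym\to\Sym^{\otimes3}$ and $\Sym$ is freely generated by the $p_k$, it is enough to check them on generators. Using $\De(p_k)=p_k\otimes1+1\otimes p_k$, both sides send $p_k$ to $p_k\otimes1\otimes1+1\otimes p_k\otimes1+1\otimes1\otimes p_k$.

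\emph{Commutativity.} Let $\tau$ be the flip on $\Sym^{\otimes2}$. Then $(\psi*\phi)(a)=(\phi\otimes\psi)(\tau\De(a))$, so it suffices to show cocommutativity, $\tau\circ\De=\De$. Again both maps are algebra morphisms, and they agree on each primitive generator $p_k$.

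\emph{Unit.} The unit is the counit $\varepsilon$, namely $\varepsilon(1)=1$ and $\varepsilon$ vanishes on elements of positive degree; this is the multiplicative functional with $\varepsilon(p_k)=0$ for all $k$. The identity $(\varepsilon\otimes\mathrm{id})\De=\mathrm{id}$ is checked on generators in the same way, and it gives $\varepsilon*\phi=\phi$.

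There is no real obstacle here. The only point needing care is the bookkeeping of infinite sums: all of the sums above are finite, since $\De(a)$ is a finite sum of tensors. The morphism-on-generators argument relies on $\De$ being an algebra homomorphism, which is the standard definition cited from \cite{Mac}.
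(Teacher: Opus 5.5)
Your proposal is correct and follows the paper's own route: the paper presents the lemma simply as a consequence of the coassociativity and cocommutativity of $\Delta$, and you make this explicit by checking both properties on the primitive generators $p_k$. You also supply the counit as the unit of the ring, a point the paper leaves implicit.
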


As a result of this lemma, for any integer $m\ge 1$ and linear functionals $\psi^{(1)},\dots,\psi^{(m)}$, we can unambiguously make sense of their product $\psi^{(1)}*\dots*\psi^{(m)}$.

\medskip

Next, note that for any $r,s\ge 0$, the dilations \eqref{eq2.C} compose in the sense that $(\varphi_r)_s=\varphi_{rs}$.
More generally, due to the fact that $\Delta$ is degree-preserving, we deduce that
\[
\big(\varphi^{(1)}_{r_1}*\dots*\varphi^{(m)}_{r_m}\big)_s = \varphi^{(1)}_{r_1s}*\dots\varphi^{(m)}_{r_ms},
\]
for any $m$ linear functionals $\varphi^{(1)},\dots,\varphi^{(m)}\in\Sym'$, and any $r_1,\dots,r_m,s\ge 0$.
A usual induction argument then leads to the following generalization of Proposition~\ref{prop2.B}.

\begin{proposition}[Kerov's mixing construction]\label{prop2.C}
Let $t\in(0,1)$, let $m\ge 2$ be an integer, and let $r_1,\dots,r_m$ be positive real numbers such that $r_1+\dots+r_m=1$. If $\varphi^{(1)},\dots,\varphi^{(m)}\in\Phi_1(t)$, then $\varphi^{(1)}_{r_1}*\dots*\varphi^{(m)}_{r_m}\in\Phi_1(t)$.
\end{proposition}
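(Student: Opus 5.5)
We argue by induction on $m$, with the case $m=2$ being exactly Proposition~\ref{prop2.B}. Assume the claim holds for $m-1\ge2$ functionals, and let $\varphi^{(1)},\dots,\varphi^{(m)}\in\Phi_1(t)$ and $r_1,\dots,r_m>0$ with $r_1+\dots+r_m=1$. Set $s:=r_1+\dots+r_{m-1}$, so that $0<s<1$ and $r_m=1-s$. Put $\rho_i:=r_i/s$ for $i=1,\dots,m-1$; these are positive and sum to $1$. By the induction hypothesis, the functional
$$
\chi:=\varphi^{(1)}_{\rho_1}*\dots*\varphi^{(m-1)}_{\rho_{m-1}}
$$
lies in $\Phi_1(t)$.

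Next we use the compatibility of dilations with the product $*$ noted just before the statement. Because $\De$ is degree-preserving, we have $\chi_s=\varphi^{(1)}_{\rho_1s}*\dots*\varphi^{(m-1)}_{\rho_{m-1}s}=\varphi^{(1)}_{r_1}*\dots*\varphi^{(m-1)}_{r_{m-1}}$. Lemma~\ref{lem2.B} (associativity of $*$) then gives
$$
\varphi^{(1)}_{r_1}*\dots*\varphi^{(m)}_{r_m}=\chi_s*\varphi^{(m)}_{1-s}.
$$
This is the mixing of $\chi$ and $\varphi^{(m)}$ with parameters $(s,1-s)$. Both functionals lie in $\Phi_1(t)$, so Proposition~\ref{prop2.B} places the right-hand side in $\Phi_1(t)$, which closes the induction.

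The only step needing care is the dilation identity $(\varphi_r*\psi_{r'})_s=\varphi_{rs}*\psi_{r's}$. To check it, evaluate both sides on a homogeneous $a$ with $\De(a)=\sum a'_i\otimes a''_i$, where all $a'_i,a''_i$ are homogeneous and $\deg a'_i+\deg a''_i=\deg a$. The factor $s^{\deg a}$ then splits as $s^{\deg a'_i}s^{\deg a''_i}$, which gives the identity. Associativity, from Lemma~\ref{lem2.B}, is what lets us regroup the $m$-fold product as a binary one. With these two facts in hand, the induction is routine.
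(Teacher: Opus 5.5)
Your proof is correct and is essentially the paper's argument: the paper also obtains the statement by the ``usual induction'' from Proposition~\ref{prop2.B}, using associativity of $*$ (Lemma~\ref{lem2.B}) and the rule $\big(\varphi^{(1)}_{r_1}*\dots*\varphi^{(m)}_{r_m}\big)_s=\varphi^{(1)}_{r_1s}*\dots*\varphi^{(m)}_{r_ms}$. Your renormalization $\rho_i=r_i/s$, which reduces the inductive step to a binary mixing with parameters $(s,1-s)$, makes that induction explicit.
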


In more detail, we note that the value of $\varphi^{(1)}_{r_1}*\dots*\varphi^{(m)}_{r_m}$ on a homogeneous $a\in\Sym$ can be obtained as follows. 
If we write any decomposition $\Delta^m(a)=\sum_i{ a_i^{(1)}\otimes\dots\otimes a_i^{(m)} }$, where $a_i^{(1)},\dots,a_i^{(m)}$ are all homogeneous, then
\begin{equation}\label{eq2.O}
\big(\varphi^{(1)}_{r_1}*\dots*\varphi^{(m)}_{r_m}\big)(a) = \sum_i{ \varphi^{(1)}(a_i^{(1)})\cdots\varphi^{(m)}(a_i^{(m)})\,r_1^{\deg a_i^{(1)}}\cdots r_m^{\deg a_m^{(1)}} }.
\end{equation}

Another way to obtain functionals in $\Phi_1(t)$ is by taking limits.
For the next proposition, recall that for any $\mu=(1^{m_1(\mu)}2^{m_2(\mu)}\dots)\in\Y$, we denote $p_\mu:=\prod_{i\ge 1}{p_i^{m_i(\mu)}}$.

\begin{proposition}\label{prop2.D}
Let $t\in (0,1)$ and let $\{\varphi^{(n)}\}_{n\ge 1}\subset\Phi_1(t)$ be a sequence such that the limits $\lim_{n\to\infty}{\varphi^{(n)}(p_\mu)}$ exist, for all $\mu\in\Y$.
Then the unique linear functional $\varphi:\Sym\to\R$ defined by
\begin{equation}\label{eq2.N}
\varphi(p_\mu) := \lim_{n\to\infty}{\varphi^{(n)}(p_\mu)},\text{ for all }\mu\in\Y,
\end{equation}
and extended by linearity, belongs to $\Phi_1(t)$.
\end{proposition}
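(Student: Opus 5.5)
The plan is to check the three defining conditions of $\Phi_1(t)$ for $\varphi$ one at a time. Each should pass to the limit, because it involves only finitely many values at a time and these can be written as finite linear combinations of the values on the $p_\mu$.

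First, $\varphi$ is well defined. The products $p_\mu$, $\mu\in\Y$, form a linear basis of $\Sym$, so prescribing the values \eqref{eq2.N} and extending by linearity gives a unique linear functional. Linearity of the limit then gives $\varphi(a)=\lim_{n\to\infty}\varphi^{(n)}(a)$ for every $a\in\Sym$: expand $a=\sum_\mu c_\mu p_\mu$ as a finite sum and pass to the limit term by term.

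Next, the three conditions. Normalization holds because $1=p_\varnothing$, so $\varphi(1)=\lim\varphi^{(n)}(1)=1$. For $p_1$-harmonicity, take any $f\in\Sym$; since $\varphi^{(n)}(p_1f)=\varphi^{(n)}(f)$ for all $n$, letting $n\to\infty$ on both sides and using the pointwise convergence just established gives $\varphi(p_1f)=\varphi(f)$. For $t$-HL-positivity, each $P_\la(\ccdot;t)$ is a finite linear combination of the $p_\mu$ with $|\mu|=|\la|$, with coefficients in $\mathbb Q(t)$ evaluated at our fixed $t\in(0,1)$. Hence $\varphi(P_\la(\ccdot;t))=\lim_n\varphi^{(n)}(P_\la(\ccdot;t))$ is a limit of nonnegative numbers, and so it is nonnegative.

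No step is a real obstacle. The only point needing care is the passage from convergence on the basis $\{p_\mu\}$ to convergence on arbitrary elements, and this is immediate because every element of $\Sym$ is a finite combination of the $p_\mu$. In particular, no uniform bounds and no appeal to Proposition~\ref{prop2.A} or compactness are required.
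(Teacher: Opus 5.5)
Your proposal is correct and follows essentially the same route as the paper. Both proofs use that $\{p_\mu\}$ is a basis to get $\varphi(a)=\lim_n\varphi^{(n)}(a)$ for all $a\in\Sym$, and then pass the normalization, $p_1$-harmonicity and $t$-HL-positivity conditions to the limit.
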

\begin{proof}
As $\{p_\mu : \mu\in\Y\}$ is a basis of $\Sym$, equations \eqref{eq2.N} imply that $\varphi(a) := \lim_{n\to\infty}{\varphi^{(n)}(a)}$, for all $a\in\Sym$.
Then the conditions of $p_1$-harmonicity and $t$-HL-positivity of $\varphi$ follow form the fact that they hold for all $\varphi^{(n)}$, and by taking the limit $n\to\infty$.
Also, $\varphi(1) = \lim_{n\to\infty}{\varphi^{(n)}(1)} = \lim_{n\to\infty}{1} = 1$. Hence, $\varphi\in\Phi_1(t)$.
\end{proof}

\subsection{}\label{sec2.4}

The punchline of the previous ideas around Kerov's mixing construction is that both Proposition \ref{prop2.C} and \ref{prop2.D} allow us to define new functionals in $\Phi_1(t)$ from known ones, at least when $t\in(0,1)$.

\begin{proposition}\label{prop2.E}
Fix $t\in(0,1)$ and introduce two multiplicative unital linear functionals, denoted by $\varphi_\row$ and $\varphi_\col$, by setting
\begin{equation*}
\varphi_\row(p_k)=1, \quad \varphi_\col(p_k)=(-1)^{k-1}\frac{(1-t)^k}{1-t^k},
\end{equation*}
for all $k=1,2,\cdots$. Then we have 
$$
\varphi_\row(P_\la(\ccdot;t))=\begin{cases} 1, & \text{if }\la=(n),\text{ for some }n=0,1,2,\dots, \\ 0, & \text{\rm otherwise,} \end{cases}
$$
$$
\varphi_\col(Q_\la(\ccdot;t))=\begin{cases} (1-t)^n, & \text{if }\la=(1^n),\text{ for some }n=0,1,2,\dots, \\ 0, & \text{\rm otherwise.} \end{cases}
$$
\end{proposition}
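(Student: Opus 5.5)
The plan is to realize both functionals as specializations of variables and read off their values from the Cauchy identity \eqref{eq2.J}. A multiplicative functional on $\Sym$ is the same thing as a ring homomorphism $\Sym\to\R$, determined by its values on $p_1,p_2,\dots$.

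For $\varphi_\row$, take the specialization $x_1=1$, $x_2=x_3=\dots=0$. It sends every $p_k$ to $1$, so it coincides with $\varphi_\row$. Under this specialization $P_\la(\ccdot;t)$ becomes $P_\la(1;t)$, the one-variable HL polynomial. This vanishes when $\ell(\la)>1$, by the stability property $P_\la(x_1,\dots,x_n;t)=0$ for $\ell(\la)>n$ \cite[Ch.~III, (2.5)]{Mac}. For $\la=(n)$ it equals $x_1^n=1$, which gives the first formula.

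For $\varphi_\col$, use the Cauchy identity \eqref{eq2.J} with a single $y$-variable $y_1=u$, namely
$$
\sum_\la P_\la(x;t)Q_\la(u;t)=\prod_i\frac{1-tux_i}{1-ux_i}.
$$
Since $Q_\la(u;t)=0$ unless $\ell(\la)\le1$, and $Q_{(n)}(u;t)=b_{(n)}(t)P_{(n)}(u;t)=(1-t)u^n$ for $n\ge1$, we get $\sum_n q_n(x;t)u^n=\prod_i\frac{1-tux_i}{1-ux_i}$. Here $q_n:=Q_{(n)}(\ccdot;t)$, and the left side equals $\exp\big(\sum_k \frac{1-t^k}{k}p_k u^k\big)$.

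The idea is to apply the duality of \cite[Ch.~III, \S8]{Mac}. I would rather avoid citing it and argue directly through the dual Cauchy identity. Define the homomorphism $\omega_t\colon\Sym\to\Sym$ by $\omega_t(p_k)=(-1)^{k-1}\frac{1-t^k}{1-t^k}\cdot$\dots\ — but that route gets messy. The cleaner approach is the following generating-function argument. Apply $\varphi_\col$ to the Cauchy identity viewed as an identity in $\Sym\otimes\R[[y]]$ with arbitrary $y=(y_1,y_2,\dots)$. Taking logarithms, the right-hand side is $\exp\big(\sum_k\frac{1-t^k}{k}p_k(x)p_k(y)\big)$. Applying $\varphi_\col$ in the $x$-variables turns it into
$$
\exp\Big(\sum_k\frac{(-1)^{k-1}(1-t)^k}{k}p_k(y)\Big)=\prod_j\big(1+(1-t)y_j\big)=\sum_n (1-t)^n e_n(y).
$$
The left-hand side becomes $\sum_\la \varphi_\col(P_\la(\ccdot;t))\,Q_\la(y;t)$.

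Now compare the two expansions in the basis $Q_\la(y;t)$. For this it suffices that $e_n=P_{(1^n)}(\ccdot;t)$ \cite[Ch.~III, (2.8)]{Mac}, so that $e_n=Q_{(1^n)}/b_{(1^n)}$. This yields $\varphi_\col(P_{(1^n)})=(1-t)^n/b_{(1^n)}(t)$ and $\varphi_\col(P_\la)=0$ otherwise. Multiplying by $b_\la$ gives $\varphi_\col(Q_{(1^n)})=(1-t)^n$ and $\varphi_\col(Q_\la)=0$ otherwise, as claimed.

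The same computation with the roles of $P$ and $Q$ swapped recovers the row case. Applying $\varphi_\row$ in the $x$-variables gives $\prod_j\frac{1-ty_j}{1-y_j}=\sum_n q_n(y)$, and $q_n=Q_{(n)}$, so $\varphi_\row(P_\la)=1$ exactly for one-row $\la$. This gives a uniform proof of both formulas.

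The main obstacle is only bookkeeping. First, the specialization must be applied to one tensor factor of the Cauchy kernel, which is legitimate because the kernel is a well-defined element of the completed tensor product and the functional is a ring homomorphism. Second, the identities $e_n=P_{(1^n)}$ and $Q_{(n)}=q_n$ need to be cited correctly from Macdonald.
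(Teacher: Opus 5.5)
Your proof is correct and is essentially the paper's: in both, the multiplicative functional is applied to one tensor factor of the Cauchy kernel $\exp\bigl(\sum_k\frac{1-t^k}{k}p_k\otimes p_k\bigr)$, and the result is compared with $\sum_n Q_{(n)}(\ccdot;t)$ and with $\sum_n(1-t)^n e_n$, using $e_n=P_{(1^n)}(\ccdot;t)$; your extra specialization argument $x=(1,0,0,\dots)$ for the row case is also a valid shortcut. The only real difference is that you apply $\varphi_\col$ to the $P$-factor, so you must divide by and then multiply back by $b_{(1^n)}(t)$, whereas the paper applies $\mathrm{id}\otimes\varphi_\col$ and reads off $\varphi_\col(Q_\la(\ccdot;t))$ directly; the unused one-variable detour and the aborted $\omega_t$ fragment should be deleted.
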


\begin{proof}
The Cauchy identity \eqref{eq2.J} can be rewritten as the following identity in $\Sym^{\otimes2}[[u]]$:
\begin{equation}\label{eq2.K}
\sum_{\la\in\Y}P_\la(\ccdot;t)\otimes Q_\la(\ccdot;t) u^{|\la|}=\exp\left(\sum_{n\ge1}\frac{1-t^n}{n}p_n\otimes p_n\, u^n\right).
\end{equation}

Applying $\varphi_\row\otimes\mathrm{id}$ to both sides of \eqref{eq2.K}, we get
\begin{gather*}
\sum_{\la\in\Y}\varphi_\row(P_\la(\ccdot;t)) Q_\la(\ccdot;t) u^{|\la|}=\exp\left(\sum_{n\ge1}\frac{1-t^n}{n}p_n\, u^n\right)\\
=\frac{H(u)}{H(tu)}=\sum_{n\ge0}Q_{(n)}(\ccdot;t) u^n,
\end{gather*}
where $H(u):=\exp\big( \sum_{n\ge 1}{\frac{1}{n}\,p_nu^n} \big)=\sum_{n\ge0}h_nu^n$, and the last equality is \cite[Ch.~III, (2.10)]{Mac}. This proves the first formula.

Likewise, applying $\mathrm{id}\otimes\varphi_\col$ to both sides of \eqref{eq2.K}, we get
\begin{gather*}
\sum_{\la\in\Y} \varphi_\col(Q_\la(\ccdot;t)) P_\la(\ccdot;t)u^{|\la|}=\exp\left(\sum_{n\ge1}(-1)^{n-1}\frac{(1-t)^n}{n}p_n\, u^n\right)\\
=\sum_{n\ge0}{e_n\big((1-t)u\big)^n}=\sum_{n\ge0}{P_{(1^n)}(\ccdot;t) \big((1-t)u\big)^n}.
\end{gather*}
This proves the second formula.
\end{proof}

\medskip

We can now apply Kerov's mixing construction (Proposition \ref{prop2.C}) starting from the simplest functionals $\varphi_\row$ and $\varphi_\col$, as follows.

\smallskip

Let $t\in(0,1)$, let $r,s\ge 1$ be any integers, and let $\alpha_1\ge\dots\ge\alpha_r\ge 0$ and $\beta_1\ge\dots\ge\beta_s\ge 0$ be real numbers such that
\[
\sum_{i=1}^r{\al_i} +\frac{1}{1-t}\sum_{j=1}^s{\be_j}=1.
\]
Further, let $\varphi^{(1)},\dots,\varphi^{(r)}$ be $r$ copies of $\varphi_\row$ and let $\psi^{(1)},\dots,\psi^{(s)}$ be $s$ copies of $\varphi_\col$. Then the functional
\begin{equation}\label{eq2.P}
\varphi^{(1)}_{\alpha_1}*\dots*\varphi^{(r)}_{\alpha_r}*\psi^{(1)}_{\frac{\beta_1}{1-t}}*\dots*\psi^{(s)}_{\frac{\beta_s}{1-t}},
\end{equation}
belongs to $\Phi_1(t)$, by virtue of Proposition \ref{prop2.C}.

Moreover, $\Delta^{r+s}(p_k) = \sum_{i=1}^{r+s}{1\otimes\dots\otimes 1\otimes p_k\otimes 1\otimes\dots\otimes 1}$, for all $k\ge 1$, where the summand indexed by $i$ has $p_k$ at the $i$-th position of the pure tensor.
Thus, from the explicit formula \eqref{eq2.O}, the values at $p_k$ of our functional in \eqref{eq2.P} coincide with the values at $p_k$ of the functional in $\ex(\Phi_1(t))$ from Theorem~\ref{thm1.A}, when the number of $\alpha$ and $\beta$ parameters is finite.
In addition, if $\varphi,\psi$ are multiplicative functionals, and $r\ge 0$, then $\varphi*\psi$ and $\varphi_r$ are also multiplicative, for any $r\ge 0$, as can be verified from the definitions \eqref{eq2.C}-\eqref{eq2.M}.
Hence, since $\varphi_\row$ and $\varphi_\col$ are multiplicative, then so is \eqref{eq2.P}; therefore, this functional coincides exactly with the functional in $\ex(\Phi_1(t))$ from Theorem~\ref{thm1.A}.

In fact, when $t\in(0,1)$, even the most generic functional in $\ex(\Phi_1(t))$ from Theorem~\ref{thm1.A} --- namely, the one with infinitely many parameters $\alpha_1\ge\alpha_2\ge\dots\ge 0$, $\be_1\ge\be_2\ge\dots\ge 0$, satisfying the more general condition $\sum_{i=1}^\infty{\alpha_i}+\frac{1}{1-t}\sum_{j=1}^\infty{\be_j}\le 1$ --- can be obtained as a limit of the ones in \eqref{eq2.P}, constructed by means of Kerov's construction, by virtue of Proposition~\ref{prop2.D}.

\smallskip

The conclusion is that Kerov's mixing construction leads to a very large family of functionals belonging to $\Phi(t)$, when $t\in(0,1)$.
And in fact, the functionals obtained exhaust the set $\ex(\Phi_1(t))$, as shown by Matveev~\cite{Mat}.
In Section~\ref{sect5}, we show an analogous construction that also leads to a large subset of $\Psi(-t)$, i.e. to a large family of $p_2$-harmonic, $(-t)$-HL-positive functionals of $\Sym$.

\section{The branching graphs $\YHL_\even(-t)$ and $\YHL_\odd(-t)$, and a theorem of Shen and Van Peski}\label{sect3}

\subsection{}

We modify the relation $\mu\nearrow\la$ as follows.

\begin{definition}[\cite{CO1}, Definition~8.1]\label{def3.A}
For $\nu,\la\in\Y$, we write $\nu\rel\la$ if $\nu\subset\la$, $|\la|=|\nu|+2$, and the two boxes of the skew diagram $\la/\nu$ either lie in a single column (so that $\la/\nu$ is a vertical domino) or lie in two consecutive columns (in particular, $\la/\nu$ may be a horizontal domino).
\end{definition}

Recall the notation $\wt P_\la(\ccdot;-t):=(-1)^{n(\la)}P_\la(\ccdot;-t)$ from \eqref{eq1.A}.

\begin{proposition}[\cite{CO1}, Proposition~8.2]\label{prop3.A}
Let $t\in(0,1)$. One has 
\begin{equation}\label{eq3.A}
p_2\cdot \wt P_\nu(\ccdot;-t)=\sum_{\la:\, \nu\rel\la} w(\nu\rel\la;-t) \wt P_\la(\ccdot;-t),
\end{equation}
where $w(\nu\rel\la;-t)$ are some strictly positive coefficients given by explicit formulas. 
\end{proposition}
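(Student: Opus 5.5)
Since $p_2$ is homogeneous of degree $2$, the product $p_2\wt P_\nu(\ccdot;-t)$ lies in $\Sym_{|\nu|+2}$ and has some expansion $\sum_\la c_{\nu\la}\wt P_\la(\ccdot;-t)$. The task is to show three things: $c_{\nu\la}=0$ unless $\nu\rel\la$; $c_{\nu\la}>0$ when $\nu\rel\la$; and the coefficients admit a closed formula. The plan is to compute $p_2P_\nu(\ccdot;\tau)$ for a general parameter $\tau$, set $\tau=-t$, and then absorb the signs $(-1)^{n(\la)-n(\nu)}$.

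\textbf{Step 1: a Pieri-type rule for $p_2$.} I write $p_2=h_2-e_2+\dots$ in a form adapted to HL functions. Using $Q_{(n)}=q_n$ with generating function $\sum_n q_nu^n=H(u)/H(\tau u)=\exp\bigl(\sum_n\frac{1-\tau^n}{n}p_nu^n\bigr)$, I get $q_1=(1-\tau)p_1$ and $q_2=\frac{(1-\tau)^2}{2}p_1^2+\frac{1-\tau^2}{2}p_2$. Hence
\[
p_2=\frac{2}{1-\tau^2}\,q_2-\frac{1}{1+\tau}\,p_1^2 .
\]
The product $q_2P_\nu$ is given by Macdonald's Pieri formula \cite[Ch.~III, (5.7')]{Mac}: it is supported on horizontal strips $\la/\nu$ of size $2$, with coefficients $\varphi_{\la/\nu}(\tau)$. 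The product $p_1^2P_\nu$ is obtained by iterating \eqref{eq2.E}, which gives a sum over two-step paths weighted by the products of the weights \eqref{eq2.F}. Combining the two yields explicit rational coefficients $c_{\nu\la}(\tau)$ for every $\la\supset\nu$ with $|\la/\nu|=2$.

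\textbf{Step 2: vanishing.} If the two boxes of $\la/\nu$ lie in columns $j<j'$ with $j'\ge j+2$, they do not interact. In that case the two-path contribution from $p_1^2$ factorizes as $2\,w_jw_{j'}$, where each $w$ has the form $(1-\tau^{m})/(1-\tau)$ with $m$ the corresponding column-length difference. The Pieri coefficient $\varphi_{\la/\nu}$ factorizes likewise, as $(1-\tau^{m})(1-\tau^{m'})$ over the relevant columns. Substituting into the formula for $p_2$ from Step 1 then gives cancellation, $c_{\nu\la}=0$, which is the expected ``$p_2$ acts by dominoes'' phenomenon. I would verify this identity directly. The columns-equal and adjacent-columns cases survive, and these are exactly the shapes allowed by Definition~\ref{def3.A}.

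\textbf{Step 3: positivity at $\tau=-t$, with the signs.} In the surviving cases, let $\la'_j-\la'_{j+1}$ and related quantities be the local column-length differences. I would compute $c_{\nu\la}(\tau)$ case by case: a vertical domino in column $j$; a horizontal domino in columns $j,j+1$ in the same row; and two boxes in adjacent columns but different rows. In each case the result should be a product or ratio of factors $1\pm\tau^{m}$ times a sign. Then I track $n(\la)-n(\nu)$, which equals the sum of $(\text{row}-1)$ over the added boxes; its parity changes according to the rows of the domino. The main obstacle is this bookkeeping: one has to show that the sign $(-1)^{n(\la)-n(\nu)}$ exactly compensates the sign of $c_{\nu\la}(-t)$ in every case, using that $1-(-t)^m>0$ always and that the potentially negative factors are of the form $(-t)^m$ or $1+(-t)$-type denominators. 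I would begin with the vertical domino, where $p_2$ behaves most like a ``doubled'' box and sign issues are clearest, and then handle the two adjacent-column configurations, checking small $\nu$ (for example $\nu=\varnothing,(1)$) as sanity tests. The resulting case formulas are the explicit weights $w(\nu\rel\la;-t)$.
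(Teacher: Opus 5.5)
Your route is viable: express $p_2$ through $q_2$ and $p_1^2$, apply \cite[Ch.~III, (5.7')]{Mac} and \eqref{eq2.E}, then absorb the signs. The paper gives no argument of its own for this statement (it quotes \cite{CO1}), so this would be a self-contained proof. As written, though, it has a concrete error, and the decisive step is missing.

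First, Step 1 contains an algebra slip, and with it Step 2 is false. From $q_2=\frac{(1-\tau)^2}{2}p_1^2+\frac{1-\tau^2}{2}p_2$ one gets
\[
p_2=\frac{2}{1-\tau^2}\,q_2-\frac{1-\tau}{1+\tau}\,p_1^2=\frac{2q_2-q_1^2}{1-\tau^2},
\]
not $-\frac{1}{1+\tau}p_1^2$. Take two boxes in columns $j'\ge j+2$, and put $m=m_j(\la)$, $m'=m_{j'}(\la)$. With your coefficient they would get $-\frac{2\tau}{(1-\tau)(1-\tau^2)}(1-\tau^{m})(1-\tau^{m'})\neq0$. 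With the correct coefficient, the $q_2$-term $\frac{2}{1-\tau^2}(1-\tau^m)(1-\tau^{m'})$ and the $p_1^2$-term $\frac{1-\tau}{1+\tau}\cdot\frac{2(1-\tau^m)(1-\tau^{m'})}{(1-\tau)^2}$ cancel exactly. The two versions agree at $\tau=0$, so a Schur check cannot catch the slip. The case $\nu=\varnothing$ at general $\tau$ does catch it, since $p_2=P_{(2)}-(1-\tau)P_{(1,1)}$.

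Second, Step 3 is where strict positivity, the actual content of the statement, has to be proved, and you only announce it. It does close. With the corrected Step 1 there are three cases.

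For a vertical domino in column $j$, rows $i,i+1$, with $m=m_j(\la)\ge2$, there is one path and no $q_2$-term, giving $c_{\nu\la}(\tau)=-\frac{(1-\tau^{m-1})(1-\tau^m)}{1-\tau^2}$.

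For a horizontal domino in row $i$, columns $j,j+1$, with $a=m_{j+1}(\la)$, one gets $c_{\nu\la}(\tau)=\frac{1-\tau^{a}}{1-\tau}$.

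For boxes $(i,j)$ and $(i',j+1)$ with $i'<i$, put $a=m_{j+1}(\la)$ and $b=m_j(\la)=i-i'$. The two paths contribute $\frac{(1-\tau^a)(1-\tau^{b+1})+(1-\tau^a)(1-\tau^{b})}{(1-\tau)^2}$, the $q_2$-term is $\frac{2(1-\tau^a)}{1-\tau^2}$, and $c_{\nu\la}(\tau)=\frac{\tau^b(1-\tau^a)}{1-\tau}$.

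In the three cases $n(\la)-n(\nu)$ equals $2i-1$, $2i-2$, and $i+i'-2\equiv b \pmod 2$ respectively. Hence $w=(-1)^{n(\la)-n(\nu)}c_{\nu\la}(-t)$ equals
\[
\frac{(1-(-t)^{m-1})(1-(-t)^{m})}{1-t^2},\qquad \frac{1-(-t)^{a}}{1+t},\qquad \frac{t^{b}(1-(-t)^{a})}{1+t},
\]
all strictly positive for $t\in(0,1)$. For example, $p_2\wt P_{(1)}=\wt P_{(3)}+t\,\wt P_{(2,1)}+(1+t^3)\,\wt P_{(1^3)}$. At $\tau=0$ the three coefficients reduce to the Murnaghan--Nakayama values $-1,1,0$.

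In the third case the sign comes from the factor $\tau^b$, not from a factor of type $1\pm\tau^m$ as your sketch anticipates. It is exactly compensated by the parity of $i+i'$. This is the bookkeeping you identified as the main obstacle, and it needs to be written out rather than asserted.
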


The set of all partitions can be expressed as the disjoint union $\Y=\Y_\even\sqcup\Y_\odd$, where
$$
\Y_\even:=\Y_0\sqcup\Y_2\sqcup\Y_4\sqcup\cdots, \qquad \Y_\odd:=\Y_1\sqcup\Y_3\sqcup\Y_5\sqcup\cdots.
$$
Next, we form two graded rooted graphs: their vertex sets are $\Y_\even$ and $\Y_\odd$, respectively; the edges are formed by the pairs $\nu\rel\la$, and the roots are the empty diagram $\varnothing$ and the one-box diagram $(1)$, respectively. We denote these graphs by the same symbols as their vertex sets $\Y_\even$ and $\Y_\odd$. 

\begin{proposition}\label{prop3.B}
The graphs $\Y_\even$ and $\Y_\odd$ just defined are connected.
\end{proposition}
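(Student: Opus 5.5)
Since the relation $\nu\rel\la$ only joins vertices on levels $n$ and $n+2$, connectedness of $\Y_\even$ (resp.\ $\Y_\odd$) will follow once we show that every diagram $\la$ with $|\la|\ge2$ has at least one $\nu\rel\la$. Indeed, iterating this downward gives a monotone path from $\la$ down to level $0$ (resp.\ level $1$). Level $0$ is the single vertex $\varnothing$ and level $1$ is the single vertex $(1)$, so every vertex is joined to the root. The paper also requires that the graphs have no pending vertices, i.e.\ that every $\nu$ has an upward neighbour; this is trivial, since adding a vertical domino in the first column, $\nu\mapsto(\nu_1,\dots,\nu_\ell,1,1)$, always gives $\nu\rel\la$. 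It may also be read off from Proposition~\ref{prop3.A}: $p_2\wt P_\nu\neq0$, so the sum in \eqref{eq3.A} is nonempty.

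The key step is the downward claim: for $|\la|\ge2$, find two boxes of $\la$ that can be removed, one after another, leaving a partition, and that lie either in one column or in two adjacent columns. I would split into two cases according to the number $\ell=\ell(\la)$ of rows.

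First suppose $\ell\ge2$, and consider the last two rows, $\la_{\ell-1}\ge\la_\ell\ge1$. If $\la_{\ell-1}=\la_\ell=:c$, then the boxes $(\ell,c)$ and $(\ell-1,c)$ form a vertical domino in column $c$. Removing both leaves a partition, because every row above $\ell-1$ has length $\ge c$ and both rows shrink to $c-1$. If $\la_{\ell-1}>\la_\ell$, remove the corner box $(\ell,\la_\ell)$ and then a corner of the resulting diagram in some column adjacent to $\la_\ell$; this is the place where I would be careful. The simpler choice is to remove the last box of row $\ell$ and, if $\la_\ell\ge2$, the box before it; this is a horizontal domino in columns $\la_\ell-1,\la_\ell$, consecutive and legitimately removable. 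If $\la_\ell=1$, remove the box $(\ell,1)$, and then we need a second removable box in column $1$ or $2$. The diagram remaining after the first removal has $\ell-1$ rows. If $\la_{\ell-1}=1$, remove $(\ell-1,1)$, which gives a vertical domino in column $1$. Otherwise $\la_{\ell-1}\ge2$, and we instead use row $\ell-1$ as above.

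Second, if $\ell=1$, then $\la=(n)$ with $n\ge2$, and we remove the horizontal domino at the end of the row. To streamline all of this, I would phrase the argument as: take the lowest row $\ell$; if $\la_\ell\ge2$, remove its last two boxes (a horizontal domino, always legal since it is the bottom row); if $\la_\ell=1$ and $\ell\ge2$, remove the last box of the bottom row together with the box $(\ell-1,\la_{\ell-1})$ when $\la_{\ell-1}\le2$, or else use the vertical pair in column $1$ when $\la_{\ell-1}=1$.

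The main obstacle is the subcase $\la_\ell=1$ with $\la_{\ell-1}\ge3$. There the bottom box sits in column $1$, and the other corners may be far away. The fix is to abandon the bottom row and remove the last two boxes of row $\ell-1$ instead. This is legal because row $\ell$ has length $1\le\la_{\ell-1}-2$, so the result is still a partition, and the two boxes form a horizontal domino. With this adjustment every case produces some $\nu\rel\la$, and the induction completes the proof.
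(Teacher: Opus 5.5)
Your proof is correct. The overall reduction matches the paper's. Connectedness follows once every $\la$ with $|\la|\ge2$ has some $\nu\rel\la$, because iterating this descends to the one-vertex level $\Y_0=\{\varnothing\}$ or $\Y_1=\{(1)\}$.

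You differ from the paper in how that $\nu$ is found. The paper splits on whether $\la$ is a $2$-core:
\begin{itemize}
\item a staircase $(k,k-1,\dots,1)$ with $k>1$ loses two border boxes in adjacent columns;
\item a non-core has a removable vertical or horizontal domino. This step uses, without saying so, the standard fact that the partitions with no removable domino are exactly the staircases.
\end{itemize}
You instead look only at the last two rows and exhibit the two boxes explicitly:
\begin{itemize}
\item the last two boxes of row $\ell$ when $\la_\ell\ge2$ (this includes $\ell=1$);
\item the boxes $(\ell,1)$ and $(\ell-1,\la_{\ell-1})$ when $\la_\ell=1$ and $\la_{\ell-1}\le2$;
\item the last two boxes of row $\ell-1$ when $\la_\ell=1$ and $\la_{\ell-1}\ge3$.
\end{itemize}

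Your version is longer but fully self-contained and constructive, and needs no facts about cores. The paper's version is shorter and shows structurally that the non-domino moves allowed by Definition~\ref{def3.A} are only ever needed for staircases.

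Two small presentational points:
\begin{itemize}
\item In your first pass, ``use row $\ell-1$ as above'' fails for $\la_{\ell-1}=2$, $\la_\ell=1$: removing the end of row $\ell-1$ would leave that row shorter than the one below it. Your streamlined version handles this case correctly via $(\ell,1)$ and $(\ell-1,2)$, so that version is the one to keep. The subcase ``$\la_{\ell-1}=1$'' under the hypothesis $\la_{\ell-1}>\la_\ell$ is vacuous.
\item The no-pending-vertices remark belongs to Corollary~\ref{cor3.A} rather than to this proposition, though it is harmless and correct.
\end{itemize}
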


\begin{proof}
We verify that if $\la\in\Y$ is distinct from $\varnothing$ and $(1)$, then there exists $\nu\in\Y$ such that $\nu\rel\la$.
Recall that a Young diagram is said to be a \emph{$2$-core} if it has the form $(k,k-1,\dots,1)$, for some $k=1,2,\cdots$. If $\la$ is a $2$-core with $k>1$, then a desired $\nu$ does exists: one can remove from $\la$ any two neighboring  border boxes to get $\nu$. If $\la$ is not a $2$-core, then one can always remove from it a vertical or a horizontal domino.
\end{proof}

\begin{corollary}\label{cor3.A}
Let $t\in(0,1)$. We denote by $\YHL_\even(-t)$ and $\YHL_\odd(-t)$  the graded rooted graphs $\Y_\even$ and $\Y_\odd$, equipped with the edge weights $w(\nu\rel\la;-t)$ defined by equation \eqref{eq3.A} (cf. Definition~\ref{def2.D}). 

Then $\YHL_\even(-t)$ and $\YHL_\odd(-t)$ are branching graphs in the sense of Definition~\ref{def2.B}, and hence Proposition
~\ref{prop2.A} is applicable to them.  
\end{corollary}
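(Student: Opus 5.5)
To prove the corollary, I will check the requirements of Definition~\ref{def2.B} one at a time for each of the graphs $\YHL_\even(-t)$ and $\YHL_\odd(-t)$.

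\emph{Grading and rootedness.} For the grading, I will let level $n$ of $\YHL_\even(-t)$ be $\Y_{2n}$, and level $n$ of $\YHL_\odd(-t)$ be $\Y_{2n+1}$. An edge $\nu\rel\la$ satisfies $|\la|=|\nu|+2$, so edges join only adjacent levels. Level $0$ is the single vertex $\varnothing$ in the even case and $(1)$ in the odd case, which are the declared roots.

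\emph{Positive weights.} The edge weights $w(\nu\rel\la;-t)$ are strictly positive by Proposition~\ref{prop3.A}.

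\emph{Connectedness.} Connectedness is Proposition~\ref{prop3.B}. More precisely, its proof shows that every vertex other than the root has a neighbour $\nu\rel\la$ one level down. So every vertex at level $n>0$ is joined to at least one vertex at level $n-1$. Iterating this gives a monotone path from any vertex down to level $0$, i.e.\ to the root.

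\emph{No pending vertices.} It remains to show that every vertex $\nu$ has at least one neighbour one level up. This is immediate: add a horizontal domino to the first row, $\la=(\nu_1+2,\nu_2,\dots)$. Then $\la$ is a partition, $\nu\subset\la$, and $\la/\nu$ is a horizontal domino, so $\nu\rel\la$. Alternatively, one can argue algebraically: $p_2\wt P_\nu(\ccdot;-t)\neq0$ because $\Sym$ is an integral domain, so the sum in \eqref{eq3.A} is nonempty.

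\emph{Finiteness of levels.} Each level $\Y_m$ is finite, so the additional finiteness hypothesis of Proposition~\ref{prop2.A} holds, and that proposition applies.

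No step is a real obstacle. The only point needing care is the downward connectivity near the bottom levels, and that is exactly what Proposition~\ref{prop3.B} supplies, using $2$-cores when no domino can be removed.
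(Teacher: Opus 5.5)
Your proposal is correct and follows essentially the same route as the paper: strictly positive weights from Proposition~\ref{prop3.A}, connectedness from Proposition~\ref{prop3.B}, and no pending vertices because adding a horizontal domino to the first row gives an upward neighbour. Your explicit checks of the grading, the roots, and the finiteness of the levels (required for Proposition~\ref{prop2.A}), together with the algebraic alternative via $p_2\wt P_\nu(\ccdot;-t)\neq0$, are sound additions that the paper leaves implicit.
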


\begin{proof}
Indeed, all conditions of Definition~\ref{def2.B} are satisfied. Namely, the edge weights are strictly positive (Proposition~\ref{prop3.A}), both graphs are connected (Proposition~\ref{prop3.B}), and there are no pending vertices (since for any $\nu$, the partition $\la$ obtained by adding a horizontal domino to the first row of $\nu$ satisfies $\nu\rel\la$).
\end{proof}

Let $\Sym_\even$ and $\Sym_\odd$ be the spans of the homogeneous components of $\Sym$ of even and odd degrees, respectively.
Recall that in Section~\ref{sec:1.4}, we defined $\Psi_\even(-t)$ and $\Psi_\odd(-t)$ as the convex cones of $p_2$-harmonic, $(-t)$-HL-positive linear functionals $\psi$ on $\Sym_\even$ and $\Sym_\odd$, respectively.
The cone $\Psi(-t)$ from Definition \ref{def1.B} is the direct sum of $\Psi_\even(-t)$ and $\Psi_\odd(-t)$. 

The cones $\Phi(\YHL_\even(-t))$ and $\Phi(\YHL_\odd(-t))$ of nonnegative harmonic functions, in the sense of Definition \ref{def2.C}, are naturally identified with $\Psi_\even(-t)$ and $\Psi_\odd(-t)$, respectively, by the equality $\psi(\wt P_\la(\cdot;-t))=\psi(\la)$.
Moreover, let 
\begin{eqnarray*}
(\Psi_\even)_1(-t) &:= \{ \psi\in\Psi_\even(-t) : \psi(1)=1 \},\\
(\Psi_\odd)_1(-t) &:= \{ \psi\in\Psi_\odd(-t) : \psi(p_1)=1 \}.
\end{eqnarray*}
Then $\Phi_1(\YHL_\even(-t))$ and $\Phi_1(\YHL_\odd(-t))$ are also identified with $(\Psi_\even)_1(-t)$ and $(\Psi_\odd)_1(-t)$, respectively.

By virtue of Proposition~\ref{prop2.A} and Corollary~\ref{cor3.A}, it follows that the convex cones $\Psi_{\even}(-t)$ and $\Psi_{\odd}(-t)$ are isomorphic to the cones of finite Borel measures on the sets of extreme points $\ex\big((\Psi_\even)_1(-t)\big)$ and $\ex\big((\Psi_\odd)_1(-t)\big)$, respectively.
Hence, explicit descriptions of these two sets would solve our Problem~\ref{problem1.A}.

\subsection{}

Let $\pi:\Sym\to\Sym$ denote the algebra endomorphism of plethysm with $p_2$, that is, $\pi(p_k)=p_{2k}$, for all $k=1,2,\cdots$.

Let $A$ and $B$ be two copies of $\Sym$. We regard $A$ as an algebra with respect to the usual operations in $\Sym$, while $B$ is endowed with the structure of an $A$-module coming from the map
\begin{equation}\label{eq3.C}
\na: A\otimes B \to B, \qquad (a,b)\mapsto \pi(a)b.
\end{equation}
The map $\na$ can be referred to as the \emph{$p_2$-twisted action} of $A$ on $B$.

\begin{definition}\label{def3.B}
We denote by $\wt f^{\,\la}_{\mu\nu}(t)$ the structure constants of $\na$ with respect to the basis $\big\{P_\mu(\ccdot;t^2): \mu\in\Y\big\}$ in $A$ and the basis $\{\wt P_\nu(\ccdot;-t): \nu\in\Y\}$ in $B$, in other words,
\begin{equation}\label{eq3.B}
\pi(P_\mu(\ccdot;t^2)) \wt P_\nu(\ccdot;-t)=\sum_{\la} \wt f^{\,\la}_{\mu\nu}(t) \wt P_\la(\ccdot;-t).
\end{equation}
Evidently, $\wt f^{\,\la}_{\mu\nu}(t)$ vanishes unless $|\la| =2|\mu|+|\nu|$. 
\end{definition}

The following fact is of key importance to us. It is extracted from the paper \cite{SVP1} by J.~Shen and R.~Van Peski.

\begin{theorem}\label{thm3.A}
Let $t=q^{-1}$, where $q=p^m$ is an odd prime power. Then the structure constants $\wt f^{\,\la}_{\mu\nu}(t)$ are nonnegative.
\end{theorem}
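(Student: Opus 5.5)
The plan is to interpret the structure constants $\wt f^{\,\la}_{\mu\nu}(q^{-1})$ probabilistically or enumeratively through the Hall-module-type framework of Shen and Van Peski \cite{SVP1}. Their setting is $\U(n,\F_{q^2})$, or more precisely hermitian/skew-hermitian modules over the ring $\oo=\F_{q^2}[[T]]$ equipped with the involution. In that setting the modified HL functions $\wt P_\la(\ccdot;-t)$ with $t=q^{-1}$ play the role that $P_\la(\ccdot;t)$ plays for the classical Hall algebra. The key structural fact I would extract from \cite{SVP1} is a formula expressing products of the form $\pi(P_\mu(\ccdot;q^{-2}))\,\wt P_\nu(\ccdot;-q^{-1})$ as counts, or as probabilities, of extension data. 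Concretely, $\mu$ would index a finite $\oo$-module $N$ of a "Lagrangian-doubled'' type of type $\mu$ over the quadratic extension, so that its Hall polynomial involves $t^2=q^{-2}$. The partition $\nu$ would index a finite $\oo$-module carrying a nondegenerate hermitian form, and $\la$ would index the module obtained by gluing.

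The steps, in order, are as follows. First, recall from \cite{SVP1} the Hall-type algebra or module in which classes of hermitian torsion modules of type $\nu$ form a module $\mathcal H^{\her}$ over the classical Hall algebra $\mathcal H$ of $\F_{q^2}[[T]]$. The action sends a module $M$ of type $\mu$ and a hermitian module $H$ of type $\nu$ to a weighted count of hermitian $H'$ of type $\la$ that contain an isotropic submodule $M$ with $M^\perp/M\cong H$. The structure constants of this action are therefore nonnegative rational numbers, since they are counts of isotropic submodules divided by orders of automorphism groups. Second, quote the isomorphism from \cite{SVP1} identifying $\mathcal H$ with $\Sym$ via $[M]\mapsto$ (a positive multiple of) $P_\mu(\ccdot;q^{-2})$, where $q^2$ is the cardinality of the residue field. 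The same isomorphism identifies $\mathcal H^{\her}$ with $\Sym$ via $[H]\mapsto$ (a positive multiple of) $\wt P_\nu(\ccdot;-q^{-1})$. Third, check that the module structure corresponds to the twisted action $\na$ of \eqref{eq3.C}, namely multiplication by $\pi(a)$. On generators this is visible from $p_2$, cf.\ Proposition~\ref{prop3.A}. In general it follows from the power-sum or cycle-index computations in \cite{SVP1}, where the norm map from $\F_{q^2}$-data doubles degrees and turns $p_k$ into $p_{2k}$. Fourth, conclude that $\wt f^{\,\la}_{\mu\nu}(t)$ equals a nonnegative count times the ratio of the positive normalizing factors, and is therefore $\ge0$.

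The main obstacle is the third step, together with the bookkeeping of normalizations. One has to make sure that the sign $(-1)^{n(\la)}$ built into $\wt P_\la$ is exactly what makes all the normalizing constants positive. Those constants are factors like $q^{\pm n(\la)}$ multiplied by $|\mathrm{Aut}|$ for the unitary groups, and they are evaluated at $-t$. For this I would compare the positive weights in \eqref{eq3.A} with the counting interpretation in the case $\mu=(1)$. I would then use multiplicativity, that is, the fact that the module is generated compatibly by the $\pi(P_{(1^k)})$, to pin down the signs in general.
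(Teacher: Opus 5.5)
Your proposal follows essentially the same route as the paper. The paper's proof just cites the constants $g^{\her,\la}_{\mu,\nu}(q)$ of \cite{SVP1}, which are nonnegative counts by definition, together with \cite[Theorem~4.3]{SVP1} (derived from Hironaka's results), which shows they differ from $\wt f^{\,\la}_{\mu\nu}(q^{-1})$ by positive factors. That theorem already contains your Steps 2--3 and the sign bookkeeping. If you wanted to fix the signs of the $B$-side normalizations yourself, the right tool is comparison with Proposition~\ref{prop3.A} at $\mu=(1)$ plus connectivity of $\Y_\even$ and $\Y_\odd$ (Proposition~\ref{prop3.B}); generation of $A$ by the $P_{(1^k)}$ does not constrain those normalizations.
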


\begin{proof}
In \cite[(4.1)]{SVP1}, Shen and Van Peski introduce some structure constants $G^{\her,\la}_{\mu,\nu}(\oo)$, which are then renamed $g^{\her,\la}_{\mu,\nu}(q)$ (see \cite[Corollary 4.4]{SVP1}). By the very definition, these constants are nonnegative. 

On the other hand, \cite[Theorem 4.3]{SVP1} establishes an explicit connection between these constants and the constants that we denoted by $\wt f^{\,\la}_{\mu\nu}(t)$, when $t=q^{-1}$, making it clear that these two sorts of constants differ by positive factors. 
\end{proof}

Note that the main ingredient in the previous proof, namely \cite[Theorem 4.3]{SVP1}, is derived from results of Y. Hironaka \cite{Hir-1988}, \cite{Hir-1999}.

\section{Embeddings $\Phi(t^2)\to\Psi_\even(-t)$ and $\Phi(t^2)\to\Psi_\odd(-t)$ }\label{sect4}

Throughout this section, we assume that $t=q^{-1}$, where $q$ is an odd prime power. We need this assumption to be able to apply Theorem~\ref{thm3.A}.

\subsection{} 
Recall that, in our notation, $A$ and $B$ are two copies of $\Sym$. The difference between them is that we regard $A$ as an algebra and $B$ only as a vector space.

We equip $A$ with the HL inner product with parameter $t^2$, denoted by $\langle\ccdot,\ccdot\rangle_{t^2}$. The HL functions $P_\la(\ccdot;t^2)$ form an orthogonal basis of $A$, and the functions $Q_\la(\ccdot;t^2)=b_\la(t^2) P_\la(\ccdot;t^2)$ form the dual basis. Note that the numbers $b_\la(t^2)$ are strictly positive. 

We equip $B$ with the HL inner product with parameter $-t$, denoted by $\langle\ccdot,\ccdot\rangle_{-t}$.
The modified HL functions $\wt P_\la(\ccdot;-t):=(-1)^{n(\la)}P_\la(\ccdot;-t)$ form an orthogonal basis of $B$, and the functions $\wt Q_\la(\ccdot;-t)=b_\la(-t) \wt P_\la(\ccdot;-t)$ form the dual basis. Note that the numbers $b_\la(-t)$ are strictly positive. 

We split $B$ into an orthogonal direct sum, $B=B_\even\oplus B_\odd$, where $B_\even$ assembles the homogeneous components of $\Sym$ of even degree, while $B_\odd$ does the same for the homogeneous components of odd degree.

Recall that $\pi: \Sym\to\Sym$ is  the algebra endomorphism sending $p_k$ to $p_{2k}$ (plethysm with $p_2$).

\subsection{}\label{sect4.2}

Let us treat $\pi$ as a linear map $A\to B_\even$ and denote by $\pi^*: B_\even \to A$ the adjoint map. According to \eqref{eq3.B}, we have 
\begin{equation}\label{eq4.A}
\pi(P_\mu(\ccdot;t^2))=\sum_{\la:\, |\la|=2|\mu|} \wt f^\la_{\mu\varnothing}(t) \wt P_\la(\ccdot;-t).
\end{equation}
Therefore, for $\la\in\Y_\even$,
\begin{equation}\label{eq4.B}
\pi^*(\wt Q_\la(\ccdot;-t))=\sum\limits_{\mu:\,  |\mu|=|\la|/2}  \wt f^\la_{\mu\varnothing}(t)Q_\mu(\ccdot;t^2).
\end{equation}

Given a linear functional $\varphi: A\to\R$, we assign to it a linear functional $\psi_\even: B_\even\to\R$ by setting, for homogeneous elements $b\in B_\even$,
\begin{equation}\label{eq4.C}
\psi_\even(b):=\varphi(\pi^*(b)) 2^{-\frac12\deg b}.
\end{equation}

\begin{theorem}\label{thm4.A}
Assume that $t=q^{-1}$, where $q$ is an odd prime power.
The correspondence $\varphi\mapsto \psi_\even$ defined by equation \eqref{eq4.C} determines an embedding of the cone $\Phi(t^2)$ into the cone $\Psi_\even(-t)$.
It also restricts to an embedding of convex sets $\Phi_1(t^2)\hookrightarrow(\Psi_\even)_1(-t)$.
\end{theorem}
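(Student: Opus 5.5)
Three things must be checked for $\varphi\in\Phi(t^2)$: that $\psi_\even$ is $(-t)$-HL-positive, that it is $p_2$-harmonic, and that the map $\varphi\mapsto\psi_\even$ is injective and respects normalization. Throughout, positivity with respect to $\wt P_\la(\ccdot;-t)$ is equivalent to positivity with respect to $\wt Q_\la(\ccdot;-t)=b_\la(-t)\wt P_\la(\ccdot;-t)$, since $b_\la(-t)>0$. Likewise, $\varphi$ is nonnegative on every $Q_\mu(\ccdot;t^2)$, since $b_\mu(t^2)>0$.

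\emph{Positivity.} By \eqref{eq4.B},
$$\psi_\even(\wt Q_\la(\ccdot;-t))=2^{-|\la|/2}\sum_{\mu}\wt f^{\,\la}_{\mu\varnothing}(t)\,\varphi(Q_\mu(\ccdot;t^2)).$$
Each $\wt f^{\,\la}_{\mu\varnothing}(t)$ is nonnegative by Theorem~\ref{thm3.A}, and each $\varphi(Q_\mu(\ccdot;t^2))$ is nonnegative as just noted. So $\psi_\even(\wt Q_\la(\ccdot;-t))\ge0$.

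\emph{Harmonicity.} This is the main step. We need $\pi^*(p_2 b)=2\,p_1\,\pi^*(b)$ for all $b\in B_\even$. Granting this, for homogeneous $b$,
$$\psi_\even(p_2b)=\varphi(2p_1\pi^*(b))\,2^{-\frac12\deg b-1}=\varphi(\pi^*(b))\,2^{-\frac12\deg b}=\psi_\even(b),$$
using the $p_1$-harmonicity of $\varphi$.

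To prove the identity, take adjoints. Multiplication by $p_2$ in $B$ has adjoint $2\,\partial/\partial p_2$ with respect to $\langle\ccdot,\ccdot\rangle_{-t}$, because $\langle p_\la,p_\mu\rangle_{-t}=\delta_{\la\mu}z_\la\prod_i(1-(-t)^{\la_i})^{-1}$ and $(1-(-t)^2)^{-1}=(1-t^2)^{-1}$. Multiplication by $p_1$ in $A$ has adjoint $(1-t^2)\,\partial/\partial p_1$ with respect to $\langle\ccdot,\ccdot\rangle_{t^2}$. The identity is therefore equivalent to
$$2\frac{\partial}{\partial p_2}\,\pi(a)=2(1-t^2)\,\pi\Big(\frac{\partial a}{\partial p_1}\Big)\cdot c$$
for a suitable constant $c$. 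I would verify it directly on power sums $a=p_\mu$. Here $\pi(p_\mu)=p_{2\mu}$, and $\partial/\partial p_2$ only sees the parts of $2\mu$ equal to $2$, i.e.\ the parts of $\mu$ equal to $1$.

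The key bookkeeping is a norm computation: $\pi$ is, up to the factor $2^{\deg}$, an isometry from $(A,\langle\ccdot,\ccdot\rangle_{t^2})$ onto its image. Indeed,
$$\langle p_{2\mu},p_{2\mu}\rangle_{-t}=z_{2\mu}\prod_i(1-t^{2\mu_i})^{-1}=2^{\ell(\mu)}\,z_\mu\prod_i(1-t^{2\mu_i})^{-1}.$$
From this, $\pi^*(p_{2\mu})=2^{\ell(\mu)}p_\mu$, and $\pi^*$ kills power sums $p_\la$ with some odd part. Then $\pi^*(p_2\,p_{2\mu})=2^{\ell(\mu)+1}p_1p_\mu=2p_1\pi^*(p_{2\mu})$. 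If $p_\la$ has an odd part, so does $p_2p_\la$, and both sides vanish. This proves the identity, and the factor $2^{-\frac12\deg b}$ in \eqref{eq4.C} is exactly what absorbs the $2$.

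\emph{Injectivity and normalization.} By the formula $\pi^*(p_{2\mu})=2^{\ell(\mu)}p_\mu$, the map $\pi^*$ is surjective onto $A$. Hence $\psi_\even=0$ forces $\varphi=0$, and the linear map $\varphi\mapsto\psi_\even$ is injective. Finally, $\pi^*(1)=1$, so $\psi_\even(1)=\varphi(1)$, and $\Phi_1(t^2)$ maps into $(\Psi_\even)_1(-t)$.

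The only delicate point is the adjoint/isometry computation above. If the sign of $(-t)^{k}$ for even $k$ causes trouble, the fallback is to check the identity directly via the pairing on the basis $p_{2\mu}$.
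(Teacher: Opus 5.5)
Your proof is correct and follows the paper's argument. Positivity goes through \eqref{eq4.B} and Theorem~\ref{thm3.A}, injectivity through surjectivity of $\pi^*$, and harmonicity through the power-sum formula $\pi^*(p_{2\mu})=2^{\ell(\mu)}p_\mu$ (with $\pi^*$ killing every other $p_\sigma$), which you simply repackage as the identity $\pi^*(p_2b)=2p_1\pi^*(b)$.

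There are two harmless slips, neither used in your direct power-sum verification. In the derivative detour, the adjoints of multiplication by $p_2$ and by $p_1$ are $\frac{2}{1-t^2}\partial/\partial p_2$ and $\frac{1}{1-t^2}\partial/\partial p_1$; the adjoint identity is then just the chain rule $\partial_{p_2}\pi(a)=\pi(\partial_{p_1}a)$. Also, the norm factor relating $p_{2\mu}$ to $p_\mu$ is $2^{\ell(\mu)}$, not $2^{\deg}$.
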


\begin{proof}
The map $\pi:A\to B_{\even}$ is injective, hence $\pi^*:B_{\even}\to A$ is surjective. Therefore, the linear map $\varphi\mapsto\psi_\even$ is injective. 

If $\varphi$ is $t^2$-positive, then we claim that $\psi_\even$ is $(-t)$-positive. Indeed, suffices to verify that $\psi_\even(\wt Q_\la(\ccdot;-t))\ge0$, for any $\la\in\Y_\even$. From \eqref{eq4.B} and \eqref{eq4.C}, it follows that 
$$
\psi_\even(\wt Q_\la(\ccdot;-t))=\sum\limits_{\mu:\,  |\mu|=\frac12|\la|}  \wt f^\la_{\mu\varnothing}(t)\varphi(Q_\mu(\ccdot;t^2))2^{-\frac12|\la|}.
$$
Since $\wt f^\la_{\mu\varnothing}(t)\ge0$ (by Theorem \ref{thm3.A}) and $\varphi(Q_\mu(\ccdot;t^2))\ge0$ (because $\varphi\in\Phi(t^2)$), we conclude that the whole expression is nonnegative, as desired.

It remains to prove that if $\varphi$ is $p_1$-harmonic, then $\psi_\even$ is $p_2$-harmonic, that is, $\psi_\even(p_2 b)=\psi_\even(b)$, for any $b\in B_\even$.
In fact, it suffices to prove this equality for all $b$ belonging to a basis of $B_\even$.
To this end, it will be convenient to work with the power sum symmetric functions
$$
p_\kappa=\prod_{i\ge1 }p_i^{m_i(\kappa)}, \quad \kappa=(1^{m_1(\kappa)} 2^{m_2(\kappa)}\dots)\in\Y,
$$
where $p_\varnothing=1$, by convention. These functions are orthogonal with respect to the HL inner product $\langle\ccdot,\ccdot\rangle_t$ for any value of the parameter $t$, and moreover
$$
\langle p_\kappa, p_\kappa\rangle_t=z_\kappa(t),
$$
where 
\begin{equation}\label{eq4.D}
z_\kappa(t):=z_\kappa \prod_{i\ge1}(1-t^i)^{-m_i(\kappa)}, \qquad z_\kappa:=\prod_{i\ge1} i^{m_i(\kappa)} m_i(\kappa)!,
\end{equation}
see \cite[Ch.~III, (4.1) and (4.11)]{Mac}. 
As a result, we have orthogonal bases $\{p_\rho: \rho\in\Y\}\subset A$ and $\{p_\si: \si\in\Y_\even\}\subset B_\even$, and the corresponding dual bases are 
$$
\left\{\frac{p_\rho}{z_\rho(t^2)}: \rho\in\Y\right \}\subset A, \qquad  \left\{\frac{p_\si}{z_\si(-t)}: \si\in\Y_\even\right\}\subset B_\even.
$$

By definition, $\pi(p_\rho)=p_{2\rho}$, for all $\rho\in\Y$, where $2\rho=(2\rho_1,2\rho_2,\dots)$. By duality, $\pi^*(p_\si)=0$, for all partitions $\si\in\Y_\even$ that are not of the form $2\rho$, while
$$
\pi^*\left(\dfrac{p_{2\rho}}{z_{2\rho}(-t)}\right)=\dfrac{p_\rho}{z_\rho(t^2)}.
$$ 
This last equality can be rewritten as
$$
\pi^*(p_{2\rho})=\dfrac{z_{2\rho}(-t)}{z_\rho(t^2)} p_\rho.
$$ 
Note that $m_j(2\rho)=0$, if $j$ is odd, while $m_{2i}(2\rho)=m_i(\rho)$, for all $i\ge 1$.
As a result, it follows from \eqref{eq4.D} that 
$$
\dfrac{z_{2\rho}(-t)}{z_\rho(t^2)}= \dfrac{z_{2\rho}}{z_{\rho}} = 2^{\ell(\rho)},
$$
and therefore
$$
\pi^*(p_{2\rho})=2^{\ell(\rho)} p_\rho .
$$ 
Taking into account the definition \eqref{eq4.C} and the fact that $\deg p_{2\rho}=2|\rho|$, we obtain
\begin{equation}\label{eq4.E}
\psi_\even(p_{2\rho})=\varphi(p_\rho) 2^{\ell(\rho)}2^{-|\rho|}.
\end{equation}

On the other hand, $p_2\cdot p_{2\rho}=p_{2(\rho\cup(1))}$ and the factor $2^{\ell(\rho)}2^{-|\rho|}$ does not change if $\rho$ is replaced by $\rho\cup(1)$.  These observations, together with definition \eqref{eq4.C} and equation \eqref{eq4.E} for $\rho\cup(1)$, give
$$
\psi_\even(p_2\cdot p_{2\rho})=\psi_\even(p_{2(\rho\cup(1))})=\varphi(p_{\rho\cup(1)})2^{\ell(\rho)}2^{-|\rho|}=\varphi(p_\rho\cdot p_1) 2^{\ell(\rho)}2^{-|\rho|}=\varphi(p_\rho) 2^{\ell(\rho)}2^{-|\rho|},
$$ 
where the last equality holds because $\varphi$ is $p_1$-harmonic. The final expression coincides with the right-hand side of \eqref{eq4.E}, proving that $\psi_\even(p_2\cdot p_{2\rho})=\psi_\even(p_{2\rho})$, for all $\rho\in\Y$.
Additionally, if $\sigma\in\Y_\even$ is not of the form $2\rho$, then $\psi_\even(p_\sigma)=\varphi(\pi^*(p_\sigma)) 2^{-\frac12|\sigma|}=0$, because $\pi^*(p_\sigma)=0$.
Likewise, $\psi_\even(p_2\cdot p_\sigma)=\psi_\even(p_{\sigma\cup(2)})=0$, because $\sigma\cup(2)$ also fails to be the form $2\rho$.
Hence, $\psi_\even(p_2b)=\psi_\even(b)$, for all $b$ belonging to the basis $\{p_\sigma : \sigma\in\Y_\even\}\subset B_\even$.

Finally, if $\varphi\in\Phi_1(t^2)$, the corresponding $\psi_{\even}$ is such that $\psi_{\even}(1)=\varphi(1)\cdot 2^0=1$, by virtue of \eqref{eq4.E} applied to $\rho=\emptyset$.
Hence, $\psi_{\even}\in(\Psi_\even)_1(-t)$, proving the last sentence, and henceforth the theorem.
\end{proof}

\subsection{}

Here, we obtain an analog of Theorem~\ref{thm4.A} for the cone $\Psi_\odd(-t)$. The argument is similar, with only minor modifications.

Let  $\wh\pi: A\to B_\odd$ be the linear map defined by
$$
\wh\pi(a):=p_1\cdot\pi(a), \quad a\in A,
$$
and let $\wh \pi^*: B_\odd \to A$ be the adjoint map. Since $p_1$ coincides with $P_{(1)}(\ccdot;t^2)=\wt P_{(1)}(\ccdot;t^2)$, we obtain from \eqref{eq3.B} that
\begin{equation}\label{eq4.A1}
\wh\pi(P_\mu(\ccdot;t^2))=\sum_{\la:\, |\la|=2|\mu|+1} \wt f^\la_{\mu(1)}(t) \wt P_\la(\ccdot;-t).
\end{equation}
Therefore, for $\la\in\Y_\odd$,
\begin{equation}\label{eq4.B1}
\wh\pi^*(\wt Q_\la(\ccdot;-t))=\sum\limits_{\mu:\, |\mu|=(|\la|-1)/2}  \wt f^\la_{\mu(1)}(t)Q_\mu(\ccdot;t^2).
\end{equation}

Given a linear functional $\varphi: A\to\R$, we assign to it the linear functional $\psi_\odd: B_\odd\to\R$ obtained by setting, for homogeneous elements $b\in B_\odd$,
\begin{equation}\label{eq4.C1}
\psi_\odd(b):=\varphi(\wh\pi^*(b)) 2^{-\frac12(\deg b-1)} (1+t).
\end{equation}

\begin{theorem}\label{thm4.B}
Assume that $t=q^{-1}$, where $q$ is an odd prime power.
The correspondence $\varphi\mapsto \psi_\odd$ defined by \eqref{eq4.C1} determines an embedding of the cone $\Phi(t^2)$ into the cone $\Psi_\odd(-t)$.
It also restricts to an embedding of convex sets $\Phi_1(t^2)\hookrightarrow(\Psi_\odd)_1(-t)$.
\end{theorem}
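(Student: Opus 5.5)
The plan is to follow the proof of Theorem~\ref{thm4.A} step by step, replacing $\pi$ by $\wh\pi$ and $B_\even$ by $B_\odd$. There are four things to check: injectivity, positivity, harmonicity, and normalization.

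\emph{Injectivity.} The map $\wh\pi:A\to B_\odd$ is injective. Indeed, $\pi$ is injective, and multiplication by $p_1$ is injective in the polynomial ring $\Sym$. Hence $\wh\pi^*$ is surjective, and so $\varphi\mapsto\psi_\odd$ is injective.

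\emph{Positivity.} Because the factors $b_\la(-t)$ are positive, it suffices to check $\psi_\odd(\wt Q_\la(\ccdot;-t))\ge0$ for $\la\in\Y_\odd$. Combining \eqref{eq4.B1} with \eqref{eq4.C1} gives
$$
\psi_\odd(\wt Q_\la(\ccdot;-t))=(1+t)\,2^{-\frac12(|\la|-1)}\sum_{\mu}\wt f^{\,\la}_{\mu(1)}(t)\,\varphi(Q_\mu(\ccdot;t^2)).
$$
Every term is nonnegative: $\wt f^{\,\la}_{\mu(1)}(t)\ge0$ by Theorem~\ref{thm3.A}, and $\varphi(Q_\mu(\ccdot;t^2))\ge0$ because $\varphi$ is $t^2$-HL-positive.

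\emph{Harmonicity.} I will again work in the orthogonal power-sum bases. We have $\wh\pi(p_\rho)=p_{2\rho\cup(1)}$. By duality, $\wh\pi^*(p_\si)=0$ unless $\si=2\rho\cup(1)$, that is, unless $\si$ has exactly one part equal to $1$ and all its other parts are even. In that case
$$
\wh\pi^*(p_{2\rho\cup(1)})=\frac{z_{2\rho\cup(1)}(-t)}{z_\rho(t^2)}\,p_\rho .
$$
To evaluate the constant, use \eqref{eq4.D}. The single part $1$ contributes the factor $1\cdot1!\cdot(1-(-t))^{-1}=(1+t)^{-1}$, and the even parts give $z_{2\rho}(-t)/z_\rho(t^2)=2^{\ell(\rho)}$, exactly as in the proof of Theorem~\ref{thm4.A}. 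So the ratio equals $2^{\ell(\rho)}/(1+t)$. The factor $(1+t)$ in \eqref{eq4.C1} was inserted precisely to cancel this, and we obtain
$$
\psi_\odd(p_{2\rho\cup(1)})=\varphi(p_\rho)\,2^{\ell(\rho)-|\rho|}.
$$
Now $p_2\,p_{2\rho\cup(1)}=p_{2(\rho\cup(1))\cup(1)}$, and the factor $2^{\ell(\rho)-|\rho|}$ is unchanged when $\rho$ is replaced by $\rho\cup(1)$. So $p_1$-harmonicity of $\varphi$ gives $\psi_\odd(p_2b)=\psi_\odd(b)$ for $b=p_{2\rho\cup(1)}$. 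If $\si\in\Y_\odd$ is not of the form $2\rho\cup(1)$, then neither is $\si\cup(2)$, so both sides vanish.

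\emph{Normalization.} Taking $\rho=\varnothing$ gives $\psi_\odd(p_1)=\varphi(1)=1$. Hence $\Phi_1(t^2)$ is mapped into $(\Psi_\odd)_1(-t)$.

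The only step needing real care is the adjoint constant $z_{2\rho\cup(1)}(-t)/z_\rho(t^2)$. There one has to confirm that the odd part $1$ contributes $(1+t)^{-1}$ and not $(1-t)^{-1}$, since this is what makes the normalizing factor in \eqref{eq4.C1} work. Everything else is a direct transcription of the even case.
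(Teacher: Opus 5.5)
Your proof is correct and follows the paper's argument essentially step for step. Injectivity comes from surjectivity of $\wh\pi^*$, and positivity from $\wt f^{\,\la}_{\mu(1)}(t)\ge0$ (Theorem~\ref{thm3.A}). Harmonicity and normalization both come from the power-sum formula $\psi_\odd(p_{(2\rho)\cup(1)})=\varphi(p_\rho)2^{\ell(\rho)-|\rho|}$, in which $z_{(1)}(-t)=(1+t)^{-1}$ is cancelled by the factor $(1+t)$ in \eqref{eq4.C1}.
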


\begin{proof}
Both plethysm with $p_2$ and multiplication by $p_1$ are injective maps, therefore $\wh\pi:A\to B_\odd$ is also injective; as a result, the adjoint map $\wh\pi^*:B_\odd\to A$ is surjective. Therefore, the linear map $\varphi\mapsto\psi_\odd$ is injective. 

If $\varphi$ is $t^2$-positive, then $\psi_\odd$ is $(-t)$-positive: the argument is the same as for $\psi_\even$; we use the fact that the structure constants $\wt f^\la_{\mu(1)}(t)$ are nonnegative, by virtue of Theorem \eqref{thm3.A}. 

Now we have to prove that if $\varphi$ is $p_1$-harmonic, then $\psi_\odd$ is $p_2$-harmonic, that is, $\psi_\odd(p_2 b)=\psi_\odd(b)$, for any $b$ belonging to the orthogonal basis $\{p_\si: \si\in\Y_\odd\}\subset B_\odd$.
 
The duals of the orthogonal bases $\{p_\rho: \rho\in\Y\}\subset A$ and $\{p_\si: \si\in\Y_\odd\}\subset B_\odd$ are
$$
\left\{\frac{p_\rho}{z_\rho(t^2)}: \rho\in\Y\right \}\subset A, \qquad  \left\{\frac{p_\si}{z_\si(-t)}: \si\in\Y_\odd\right\}\subset B_\odd.
$$

By the definition of $\wh\pi$, it sends $p_\rho$ to $p_{(2\rho)\cup(1)}$, for all partitions $\rho$.
By duality, $\wh\pi^*(p_\si)=0$, for all partitions $\si\in\Y_\odd$ that are not of the form $(2\rho)\cup(1)$, while
$$
\wh\pi^*\left(\dfrac{p_{(2\rho)\cup(1)}}{z_{(2\rho)\cup(1)}(-t)}\right)=\dfrac{p_\rho}{z_\rho(t^2)}.
$$ 
The last equality can be rewritten as
$$
\wh\pi^*(p_{(2\rho)\cup(1)})=\dfrac{z_{(2\rho)\cup(1)}(-t)}{z_\rho(t^2)}p_\rho.
$$ 
Next,  from \eqref{eq4.D} and the fact that $1$ is not a part of the partition $2\rho$, we have $z_{(2\rho)\cup(1)}(-t) = z_{2\rho}(-t)z_{(1)}(-t)$; it follows that 
$$
\dfrac{z_{(2\rho)\cup(1)}(-t)}{z_\rho(t^2)}=z_{(1)}(-t)\dfrac{z_{2\rho}(-t)}{z_\rho(t^2)}=(1+t)^{-1}2^{\ell(\rho)}.
$$
As a result, we have (cf. \eqref{eq4.E})
\begin{equation}\label{eq4.F}
\psi_\odd(p_{(2\rho)\cup(1)})=\varphi(p_\rho)2^{\ell(\rho)}2^{-|\rho|}.
\end{equation}
Multiplying $p_{(2\rho)\cup(1)}$ by $p_2$ amounts to replacing $\rho$ by $\rho\cup(1)$, which does not affect the right-hand side.
Finally, equation~\eqref{eq4.F} also shows that if $\varphi(1)=1$, the corresponding $\psi_\odd$ satisfies $\psi_\odd(p_1)=1$.
As before, this completes the proof.
\end{proof}

\section{Analog of Kerov's mixing construction}\label{sect5}

\subsection{}

Recall that we defined in \eqref{eq3.C} the $p_2$-twisted multiplication map $\na:A\otimes B\to B$ using plethysm with $p_2$. Consider now the dual map  
$$
\wt\De: B \to A\otimes B,
$$
where duality is understood with respect to the HL inner product $\langle\ccdot,\ccdot\rangle_{t^2}$ in $A$ and the HL inner product $\langle\ccdot,\ccdot\rangle_{-t}$ in $B$. Let us emphasize that $\wt\De$ is different from the standard comultiplication map $\De:\Sym\to\Sym^{\otimes2}$. It can be called the \emph{$p_2$-twisted comultiplication}.

By duality, we obtain from \eqref{eq3.B} that
\begin{equation}\label{eq5.A}
\wt\De (\wt Q_\la(\ccdot;-t))=\sum_{\mu,\nu} \wt f^{\,\la}_{\mu\nu}(t)\, Q_\mu(\ccdot;t^2) \otimes \wt Q_\nu(\ccdot;-t).
\end{equation}

\subsection{}

Below, $\varphi: A\to\R$ and $\psi:B\to\R$ are linear functionals.
By analogy with \eqref{eq2.C}, we build from $\varphi$ and $\psi$ the new linear functional $\varphi\conv\psi: B\to\R$, defined by setting
\begin{equation}\label{eq5.B}
(\varphi\conv\psi)(b): =(\varphi\otimes \psi)(\wt\De(b)), \quad b\in B.
\end{equation}

\begin{proposition}\label{prop5.A}
Let $t=q^{-1}$, where $q$ is an odd prime power. If $\varphi$ is $t^2$-HL-positive and $\psi$ is $(-t)$-HL-positive, then $\varphi\conv\psi$ is $(-t)$-HL-positive.
\end{proposition}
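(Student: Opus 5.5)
The argument mirrors part (ii) of the proof of Proposition~\ref{prop2.B}, with the ordinary comultiplication $\De$ replaced by $\wt\De$ and Lemma~\ref{lem2.A} replaced by Theorem~\ref{thm3.A}.

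First, reduce to the dual bases. Recall that $\wt Q_\la(\ccdot;-t)=b_\la(-t)\wt P_\la(\ccdot;-t)$ with $b_\la(-t)>0$. Hence $(-t)$-HL-positivity of a functional on $B$ is equivalent to nonnegativity on all $\wt Q_\la(\ccdot;-t)$. Likewise, since $Q_\mu(\ccdot;t^2)=b_\mu(t^2)P_\mu(\ccdot;t^2)$ with $b_\mu(t^2)>0$, $t^2$-HL-positivity of $\varphi$ is equivalent to $\varphi(Q_\mu(\ccdot;t^2))\ge0$ for all $\mu$. It therefore suffices to show that $(\varphi\conv\psi)(\wt Q_\la(\ccdot;-t))\ge0$ for every $\la\in\Y$.

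Next, expand using \eqref{eq5.A}. By definition \eqref{eq5.B},
$$
(\varphi\conv\psi)(\wt Q_\la(\ccdot;-t))=\sum_{\mu,\nu}\wt f^{\,\la}_{\mu\nu}(t)\,\varphi\big(Q_\mu(\ccdot;t^2)\big)\,\psi\big(\wt Q_\nu(\ccdot;-t)\big).
$$
This is a finite sum, because $\wt f^{\,\la}_{\mu\nu}(t)=0$ unless $|\la|=2|\mu|+|\nu|$. Each term is a product of three nonnegative factors:
\begin{itemize}
\item $\wt f^{\,\la}_{\mu\nu}(t)\ge0$ by Theorem~\ref{thm3.A}, which applies because $t=q^{-1}$ with $q$ an odd prime power;
\item $\varphi(Q_\mu(\ccdot;t^2))\ge0$ by hypothesis;
\item $\psi(\wt Q_\nu(\ccdot;-t))\ge0$ by hypothesis.
\end{itemize}
Hence the sum is nonnegative.

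The only step that needs care is the validity of \eqref{eq5.A}. The map $\wt\De$ is the adjoint of $\na$ with respect to $\langle\ccdot,\ccdot\rangle_{t^2}\otimes\langle\ccdot,\ccdot\rangle_{-t}$ on $A\otimes B$ and $\langle\ccdot,\ccdot\rangle_{-t}$ on $B$. The basis $\{Q_\mu(\ccdot;t^2)\}$ is dual to $\{P_\mu(\ccdot;t^2)\}$, and $\{\wt Q_\nu(\ccdot;-t)\}$ is dual to $\{\wt P_\nu(\ccdot;-t)\}$; the sign $(-1)^{n(\nu)}$ is harmless because it is the same factor in both families. Consequently, the coefficient of $Q_\mu\otimes\wt Q_\nu$ in $\wt\De(\wt Q_\la)$ equals
$$
\big\langle \pi(P_\mu(\ccdot;t^2))\,\wt P_\nu(\ccdot;-t),\ \wt Q_\la(\ccdot;-t)\big\rangle_{-t}=\wt f^{\,\la}_{\mu\nu}(t),
$$
using \eqref{eq3.B}. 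This confirms \eqref{eq5.A} and completes the argument.
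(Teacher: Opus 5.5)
Your proposal is correct and follows the paper's proof: you reduce to nonnegativity on the dual basis $\wt Q_\la(\ccdot;-t)$, expand via \eqref{eq5.A}, and conclude from Theorem~\ref{thm3.A} together with the two positivity hypotheses. Your adjointness check of \eqref{eq5.A} spells out the ``by duality'' step that the paper leaves implicit.
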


\begin{proof}
This is a direct consequence of Theorem~\ref{thm3.A}. Indeed, it suffices to verify that $(\varphi\conv\psi)(\wt Q_\la(\ccdot;-t))\ge0$, for all $\la\in\Y$. By \eqref{eq5.A}, we have
$$
(\phi\conv\psi) (\wt Q_\la(\ccdot;-t))=\sum_{\mu,\nu} \wt f^{\,\la}_{\mu\nu}(t) \varphi(Q_\mu(\ccdot;t^2))\psi(\wt Q_\nu(\ccdot;-t)).
$$
The coefficients $\wt f^{\,\la}_{\mu\nu}(t)$ are nonnegative by virtue of Theorem \ref{thm3.A}, while $\varphi(Q_\mu(\ccdot;t^2))$ and $\psi(\wt Q_\nu(\ccdot;-t))$ are nonnegative by the assumptions. Hence, the sum above is also nonnegative, as desired.
\end{proof}

We define \emph{dilations} of $\varphi:A\to\R$ and $\psi:B\to\R$, with real parameters $r\ge 0$ and $s\ge 0$, respectively, as the linear functionals $\varphi_r:A\to\R$ and $\psi_s:B\to\R$, defined by the condition that for all homogeneous elements $ a\in A$ and $b\in B$,
$$
\varphi_r(a):=\varphi(a) r^{\deg a}, \qquad \psi_s(b):=\psi(b) s^{\frac12\deg b}.
$$
As before, we use the convention that $0^0=1$.

\begin{definition}[cf. Definition~\ref{def2.A}]\label{def5.A}
Let $\varphi:A\to\R$ and $\psi:B\to\R$ be two linear functionals. Their \emph{mixing with parameters $(r,s)$}, where $r,s\ge 0$, $2r+s=1$, is the linear functional $\varphi_r\conv\psi_s$.
\end{definition}

In more detail, the value of $\varphi_r\conv\psi_s$ on a homogeneous element $b\in B$ is given by the following formula.
For any decomposition $\wt\De(b)=\sum_i a_i\otimes b_i$, where $a_i\in A$ and $b_i\in B$ are homogeneous, then
\begin{equation*}
(\varphi_r\conv\psi_s)(b)=\sum_i \varphi(a_i)\psi(b_i)r^{\deg a_i}s^{\frac12\deg b_i}.
\end{equation*}
This formula is similar, but not identical, to \eqref{eq2.A}. The key difference comes from the fact that $\wt\De\ne\De$.

\begin{proposition}\label{prop5.B}
Let $t=q^{-1}$, where $q$ is an odd prime power. If $\varphi$ is $t^2$-HL-positive, $\psi$ is $(-t)$-HL-positive, and $r,s\ge 0$, then $\varphi_r\conv\psi_s$ is $(-t)$-HL-positive.
\end{proposition}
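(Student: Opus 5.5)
The plan is to reduce the claim to Proposition~\ref{prop5.A} by showing that dilation preserves the relevant positivity. Specifically, I will check that if $\varphi$ is $t^2$-HL-positive and $r\ge0$, then $\varphi_r$ is still $t^2$-HL-positive. Likewise, if $\psi$ is $(-t)$-HL-positive and $s\ge0$, then $\psi_s$ is still $(-t)$-HL-positive. Once this is done, Proposition~\ref{prop5.A} applied to the pair $(\varphi_r,\psi_s)$ gives the statement at once. The relation $2r+s=1$ from Definition~\ref{def5.A} plays no role here; it will only matter for harmonicity.

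For the first claim, use that $P_\mu(\ccdot;t^2)$ is homogeneous of degree $|\mu|$. Then
$$
\varphi_r\big(P_\mu(\ccdot;t^2)\big)=r^{|\mu|}\,\varphi\big(P_\mu(\ccdot;t^2)\big)\ge0,
$$
since $r^{|\mu|}\ge0$, with the convention $0^0=1$. For the second claim, $\wt P_\nu(\ccdot;-t)=(-1)^{n(\nu)}P_\nu(\ccdot;-t)$ is homogeneous of degree $|\nu|$. Hence
$$
\psi_s\big(\wt P_\nu(\ccdot;-t)\big)=s^{\frac12|\nu|}\,\psi\big(\wt P_\nu(\ccdot;-t)\big)\ge0,
$$
because $s\ge0$ makes $s^{\frac12|\nu|}$ a well-defined nonnegative real number.

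Alternatively, one can argue directly with the basis $\wt Q_\la(\ccdot;-t)$, using \eqref{eq5.A}. This gives
$$
(\varphi_r\conv\psi_s)\big(\wt Q_\la(\ccdot;-t)\big)=\sum_{\mu,\nu}\wt f^{\,\la}_{\mu\nu}(t)\,\varphi\big(Q_\mu(\ccdot;t^2)\big)\,\psi\big(\wt Q_\nu(\ccdot;-t)\big)\,r^{|\mu|}s^{\frac12|\nu|}.
$$
Every factor on the right is nonnegative: the structure constants by Theorem~\ref{thm3.A}, and the values of $\varphi$ and $\psi$ by hypothesis, after passing to the $Q$-bases via the positive factors $b_\mu(t^2)$ and $b_\nu(-t)$.

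There is no real obstacle here. The only point needing care is that the basis elements are homogeneous, so that the dilations act on them by nonnegative scalars; this is immediate from the definitions.
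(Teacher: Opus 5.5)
Your proposal is correct and matches the paper's proof. Dilations with $r,s\ge0$ multiply each homogeneous basis element by a nonnegative scalar, so they preserve the respective HL-positivity, and Proposition~\ref{prop5.A} then finishes; your alternative direct computation via \eqref{eq5.A} is also valid and simply inlines the proof of Proposition~\ref{prop5.A}.
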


\begin{proof}
Note that if $\varphi$ is $t^2$-HL-positive and $r\ge 0$, then by definition $\varphi_r$ is $t^2$-HL-positive, too. Likewise, if $\psi$ is $(-t)$-HL-positive and $s\ge 0$, then $\psi_s$ is also $(-t)$-HL-positive. Then Proposition~\ref{prop5.A} finishes the proof.
\end{proof}

Next, we show that $\conv$ interacts well with the $p_2$-harmonicity property.

\begin{proposition}\label{prop5.C}
Suppose that $\varphi$ is $p_1$-harmonic and $\psi$ is $p_2$-harmonic. If $r,s\ge 0$ are such that $2r+s=1$, then $\varphi_r\conv\psi_s$ is $p_2$-harmonic.
\end{proposition}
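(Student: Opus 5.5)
The plan is to compute $\wt\De$ explicitly on the power-sum basis, show that it is an algebra homomorphism $B\to A\otimes B$, and read off $p_2$-harmonicity from the value of $\wt\De(p_2)$, exactly as in part (i) of the proof of Proposition~\ref{prop2.B}.

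\emph{Step 1: $\wt\De$ on power sums.} By duality with the twisted multiplication $\na$, for $\si\in\Y$ we have
$$
\langle \wt\De(p_\si),\, p_\rho\otimes p_\kappa\rangle=\langle p_\si,\, p_{2\rho}\,p_\kappa\rangle_{-t}=z_\si(-t)\,\delta_{\si,\,(2\rho)\cup\kappa}.
$$
We use the dual bases $\{p_\rho/z_\rho(t^2)\}$ and $\{p_\kappa/z_\kappa(-t)\}$ from the proof of Theorem~\ref{thm4.A}. This gives
$$
\wt\De(p_\si)=\sum_{(\rho,\kappa):\,(2\rho)\cup\kappa=\si}\frac{z_\si(-t)}{z_\rho(t^2)\,z_\kappa(-t)}\,p_\rho\otimes p_\kappa .
$$
In particular, for a single part we get:
\begin{itemize}
\item $\wt\De(p_{2k})=2\,p_k\otimes1+1\otimes p_{2k}$, because $z_{(2k)}(-t)/z_{(k)}(t^2)=\frac{2k/(1-t^{2k})}{k/(1-t^{2k})}=2$;
\item $\wt\De(p_{2k-1})=1\otimes p_{2k-1}$.
\end{itemize}

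\emph{Step 2: multiplicativity.} We will check that the coefficient above equals the coefficient of $p_\rho\otimes p_\kappa$ in $\prod_i\wt\De(p_{\si_i})$, computed with the formulas of Step 1. This is a bookkeeping check of the factors in \eqref{eq4.D}:
\begin{itemize}
\item the factors $(1-(\pm t)^{j})^{-m_j}$ and $j^{m_j}$ split compatibly, since $1-(-t)^{2i}=1-(t^2)^i$;
\item the factorials give $m_{2i}(\si)!/\bigl(m_i(\rho)!\,m_{2i}(\kappa)!\bigr)$, a binomial coefficient counting which copies of the part $2i$ go to $\rho$;
\item the powers of $2$ give $2^{m_i(\rho)}$, matching the factor $2$ in $\wt\De(p_{2i})$.
\end{itemize}
Hence $\wt\De$ is an algebra homomorphism $B\to A\otimes B$, where $A\otimes B$ carries the usual tensor-product algebra structure. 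In particular,
$$
\wt\De(p_2 b)=(2p_1\otimes 1+1\otimes p_2)\,\wt\De(b).
$$
I expect this step to be the only real obstacle. If multiplicativity turns out to be awkward to state cleanly, it is enough to verify the last displayed identity directly on each $p_\si$, by comparing the coefficients for $\si\cup(2)$ with those for $\si$. Here the case where the extra part $2$ goes to $\rho$ (as a new part $1$) and the case where it stays in $\kappa$ must be counted with the multiplicity ratios $m_2(\si\cup(2))/\bigl(m_1(\rho')\,\ldots\bigr)$ coming out of the formula.

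\emph{Step 3: harmonicity.} Take $b$ homogeneous and write $\wt\De(b)=\sum_i a_i\otimes b_i$ with homogeneous terms. By Step 2,
$$
\wt\De(p_2b)=\sum_i\bigl(2\,p_1a_i\otimes b_i+a_i\otimes p_2b_i\bigr).
$$
Evaluate $\varphi_r\otimes\psi_s$ on this. In the first group, $\deg a_i$ increases by $1$ and $\varphi$ is $p_1$-harmonic, so each term becomes $2r\,\varphi(a_i)\psi(b_i)r^{\deg a_i}s^{\frac12\deg b_i}$. In the second group, $\frac12\deg b_i$ increases by $1$ and $\psi$ is $p_2$-harmonic, so each term picks up a factor $s$ in the same way. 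Therefore
$$
(\varphi_r\conv\psi_s)(p_2b)=(2r+s)\,(\varphi_r\conv\psi_s)(b)=(\varphi_r\conv\psi_s)(b),
$$
as required.
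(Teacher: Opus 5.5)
Your proof is correct, but it is organized differently from the paper's proof of this proposition.

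\textbf{The paper's route.} It works directly with the explicit formula for $(\varphi_r\conv\psi_s)(p_\tau)$. It writes $\tau=\bar\tau\cup(2^m)$ and factorizes $z_\tau$ over distinct parts. The inner sum then becomes $\sum_{k+l=m}\binom{m}{k}(2r)^k s^l=(2r+s)^m$, which shows the value does not depend on $m_2(\tau)$.

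\textbf{Your route.} You extract the structural fact behind that computation. The same factorial and power-of-$2$ bookkeeping shows that $\wt\De$ is an algebra homomorphism $B\to A\otimes B$, determined by $p_{2k}\mapsto 2p_k\otimes1+1\otimes p_{2k}$ and $p_{2k-1}\mapsto1\otimes p_{2k-1}$. Harmonicity is then the one-line argument of Proposition~\ref{prop2.B}(i), with $r+s$ replaced by $2r+s$.

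The arithmetic content is identical. Your factor $2^{m_i(\rho)}\,m_{2i}(\si)!/\bigl(m_i(\rho)!\,m_{2i}(\kappa)!\bigr)$ is exactly the paper's $\binom{m}{k}2^k$. Your packaging, however, proves something stronger and more reusable:

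\begin{itemize}
\item It is essentially the alternative proof the paper mentions in the remark after Theorem~\ref{thm6.A} (set $a=p_1$ in \eqref{eq6.A}). The difference is that you get $\wt\De(p_2b)=(2p_1\otimes1+1\otimes p_2)\wt\De(b)$ from full multiplicativity. The paper only records multiplicativity \eqref{eq6.C} for partitions with no common parts, and treats repeated parts by a separate binomial expansion.
\item Your lemma gives Theorem~\ref{thm6.A} at once. Indeed, $\wt\De\circ\pi$ and $(\xi\otimes\pi)\circ\De$ are algebra homomorphisms $A\to A\otimes B$ that agree on every $p_k$, so $\wt\De(\pi(a)b)=\wt\De(\pi(a))\,\wt\De(b)=(\xi\otimes\pi)(\De a)\cdot\wt\De(b)$.
\end{itemize}

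The paper's direct computation has the advantage of being self-contained where it appears. It never needs the product structure on $A\otimes B$ and uses only the formula \eqref{eq5.C} it has just derived.
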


\begin{proof}
As in the proof of Theorem~\ref{thm4.A}, we will work with the power sum symmetric functions
$$
p_\kappa=\prod_{i\ge1 }p_i^{m_i(\kappa)},\quad \kappa=(1^{m_1(\kappa)} 2^{m_2(\kappa)}\dots)\in\Y,
$$
where, by convention,  $p_\varnothing=1$. Recall that they are orthogonal with respect to the HL inner product $\langle\ccdot,\ccdot\rangle_t$, for any value of $t$, and
$$
\langle p_\kappa, p_\kappa\rangle_t=z_\kappa(t),
$$
where $z_\kappa(t)$ is defined in \eqref{eq4.D}.
As a result, the dual bases to the orthogonal bases $\{p_\rho: \rho\in\Y\}\subset A$, $\{p_\si: \si\in\Y\}\subset B$ are 
$$
\left\{\frac{p_\rho}{z_\rho(t^2)}: \rho\in\Y\right \}\subset A, \qquad  \left\{\frac{p_\si}{z_\si(-t)}: \si\in\Y\right\}\subset B.
$$

Note that $\pi(p_\mu)=p_{2\mu}$. Also, the operation of multiplication of power sum symmetric functions amounts to concatenation of partitions. Hence,
$$
\na(p_\mu\otimes p_\nu)=\pi(p_\mu)p_\nu=p_{(2\mu)\cup\nu}.
$$
From this, we deduce that
\begin{equation}\label{eq5.D}
\wt\De(p_\tau)=\sum_{\substack{\rho,\si\\ (2\rho)\cup\si=\tau}}\frac{z_\tau(-t)}{z_\rho(t^2)z_\si(-t)} p_\rho \otimes p_\si, \quad \tau\in\Y,
\end{equation}
and therefore
\begin{equation}\label{eq5.C}
(\varphi_r\conv\psi_s)(p_\tau)=\sum_{\substack{\rho,\si \\ (2\rho)\cup\si=\tau}}\frac{z_\tau(-t)}{z_\rho(t^2)z_\si(-t)}  
r^{|\rho|} s^{|\si|/2}\varphi(p_\rho)\psi(p_\si), \quad \tau\in\Y.
\end{equation}

We are going to show that under our assumptions, the right-hand side of \eqref{eq5.C} does not depend on the multiplicity $m_2(\tau)$ (with all other multiplicities $m_i(\tau)$ being fixed). This will imply that the right-hand side does not change when $\tau$ is replaced by $\tau\cup(2)$, which immediately implies that $\varphi_r\conv\psi_s$ is $p_2$-harmonic. 

Examine first the case when $\tau=(2^m)$. Then the condition $(2\rho)\cup\si=\tau$ means that $\rho=(1^k)$, $\si=(2^l)$, for some nonnegative integers $k,l$ such that $k+l=m$. By \eqref{eq4.D}, we have 
\begin{gather*}
z_{(2^m)}(-t)=2^m m! (1-(-t)^2)^{-m}=2^m m! (1-t^2)^{-m}, \\
z_{(1^k)}(t^2)=k! (1-t^2)^{-k}, \quad z_{(2^l)}(-t)=2^l l! (1-(-t)^2)^{-l}=2^l l! (1-t^2)^{-l}. 
\end{gather*}
It follows that the right-hand side of \eqref{eq5.C} in this case turns into
$$
\sum_{k+l=m}\frac{m!}{k! l!}\, 2^k r^k s^l \varphi(p_1^k) \psi(p_2^l)=\sum_{k+l=m}\frac{m!}{k! l!}\, (2r)^k s^l \varphi (p_1^k) \psi(p_2^l).
$$
Because $\varphi$ is $p_1$-harmonic and $\psi$ is $p_2$-harmonic, we have that $\varphi(p_1^k)=\varphi(1)$ and $\psi(p_2^l)=\psi(1)$. Thus, the previous expression simplifies to
$$
\varphi(1)\psi(1) \sum_{k+l=m} \frac{m!}{k! l!}\, (2r)^k s^l= \varphi(1)\psi(1) (2r+s)^m=\varphi(1)\psi(1).
$$
Thus, the result does not depend on $m$, as desired.

\smallskip

For the general case, we can write $\tau$ in the form $\bar\tau\cup (2^m)$, where $m_2(\bar\tau)=0$. Then, any splitting $\tau=(2\rho)\cup\si$ is determined by a splitting $\bar\tau=(2\bar\rho)\cup\bar\si$, together with a splitting of $(2^m)$, which is equivalent to a pair $(k,l)$ of nonnegative integers such that $m=k+l$.
In the splitting of $\bar\tau$, we have $m_1(\bar\rho)=0$ and $m_2(\bar\si)=0$.

Consequently, the sum over all possible couples $(\rho,\si)$ can be represented as a double sum: first, over $(\bar\rho,\bar\si)$ and next, over $(k,l)$. An important property of \eqref{eq4.D} is that we have the factorizations
$$
z_\tau(-t)=z_{(2^m)}(-t) z_{\bar\tau}(-t), \quad z_\rho(t^2)=z_{(1^k)}(t^2) z_{\bar\rho}(t^2), \quad z_\si(-t)=z_{(2^l)}(-t) z_{\bar\si}(-t). 
$$
This allows us to apply the above argument to the inner sum over $(k,l)$, for each fixed $(\bar\rho,\bar\si)$. We obtain that the inner sum depends only on $(\bar\rho,\bar\si)$, but not on $m$. Therefore, the whole expression does not depend on $m=m_2(\tau)$, as desired. This completes the proof. 
\end{proof}

\subsection{}

Let us assume that:

\begin{itemize}
\item $t=q^{-1}$, where $q$ is an odd prime power;

\item $\varphi: A\to\R$ is a linear functional, which is $t^2$-HL-positive and $p_1$-harmonic, that is,  $\varphi\in\Phi(t^2)$;

\item$\psi: B\to\R$ is a linear functional, which is $(-t)$-HL-positive and $p_2$-harmonic, that is, $\psi\in\Psi(-t)$;

\item $r$ and $s$ are two real parameters such that $r,s\ge 0$ and $2r+s=1$.

\end{itemize}
 
\begin{theorem}[Adaptation of Kerov's mixing construction for $\Psi(-t)$]\label{thm5.A}
Under these assumptions, the mixing $\varphi_r\conv\psi_s:B\to\R$, introduced in Definition~\ref{def5.A}, is $(-t)$-HL-positive and $p_2$-harmonic, that is, $\varphi_r\conv \psi_s\in\Psi(-t)$.
\end{theorem}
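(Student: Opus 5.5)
The theorem is essentially a combination of the two propositions just proved, so the plan is to assemble them rather than to do any new computation. The mixing $\varphi_r\conv\psi_s$ is a linear functional on $B=\Sym$. By Definition~\ref{def1.B}, membership in $\Psi(-t)$ requires exactly two properties, $(-t)$-HL-positivity and $p_2$-harmonicity. I will verify these separately.

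\emph{Positivity.} Since $\varphi\in\Phi(t^2)$, it is in particular $t^2$-HL-positive. Since $\psi\in\Psi(-t)$, it is $(-t)$-HL-positive. The parameters satisfy $r,s\ge0$. Proposition~\ref{prop5.B} then applies directly, giving that $\varphi_r\conv\psi_s$ is $(-t)$-HL-positive. This is where the hypothesis $t=q^{-1}$, with $q$ an odd prime power, is used: it enters through Theorem~\ref{thm3.A}, the nonnegativity of the structure constants $\wt f^{\,\la}_{\mu\nu}(t)$, on which Propositions~\ref{prop5.A} and~\ref{prop5.B} rest. One small point to record is that positivity of $\psi$ against the $\wt P_\la(\ccdot;-t)$ is equivalent to positivity against the $\wt Q_\la(\ccdot;-t)$, because $b_\la(-t)>0$. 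The same holds for $\varphi$ with $b_\mu(t^2)>0$. This justifies the $Q$-basis formulation used in Proposition~\ref{prop5.A}.

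\emph{Harmonicity.} Since $\varphi\in\Phi(t^2)$, it is $p_1$-harmonic. Since $\psi\in\Psi(-t)$, it is $p_2$-harmonic. Together with $2r+s=1$, Proposition~\ref{prop5.C} gives that $\varphi_r\conv\psi_s$ is $p_2$-harmonic. Proposition~\ref{prop5.C} is stated for arbitrary linear functionals on the whole of $\Sym$, so no splitting into even and odd parts is needed here.

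Combining the two properties, $\varphi_r\conv\psi_s\in\Psi(-t)$. No step is a genuine obstacle, since all the difficulty has been absorbed into Theorem~\ref{thm3.A} and the power-sum computation in Proposition~\ref{prop5.C}. The only care required is to match hypotheses precisely. In particular, the constraint $2r+s=1$ (rather than $r+s=1$) is used only for harmonicity, while positivity needs only $r,s\ge0$.
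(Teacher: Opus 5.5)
Your proposal is correct and follows the paper's proof exactly: $(-t)$-HL-positivity comes from Proposition~\ref{prop5.B}, which rests on Theorem~\ref{thm3.A}, and $p_2$-harmonicity comes from Proposition~\ref{prop5.C}, which uses $2r+s=1$. Your remarks about the $\wt Q$-basis reformulation (via $b_\la(-t)>0$) and about where each hypothesis is used are accurate.
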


\begin{proof}
Follows from Propositions \ref{prop5.B} and \ref{prop5.C}. 
\end{proof}

Recall that $\Psi(-t)$ can be decomposed as $\Psi(-t)=\Psi_\even(-t)\oplus\Psi_\odd(-t)$, where
$$
\Psi_\even(-t):=\{\psi\in\Psi(-t): \psi\big|_{B_\odd}=0\},\quad \Psi_\odd(-t):=\{\psi\in\Psi(-t): \psi\big|_{B_\even}=0\}.
$$

The following corollary  of Theorem~\ref{thm5.A} is now evident. 

\begin{corollary}\label{cor5.A}
Under the same assumptions as above, we have:

{\rm(i)} If, additionally, $\psi\in\Psi_\even(-t)$, then $\varphi_r\conv \psi_s\in\Psi_\even(-t)$.

{\rm(ii)} If, additionally, $\psi\in\Psi_\odd(-t)$, then $\varphi_r\conv \psi_s\in\Psi_\odd(-t)$.
\end{corollary}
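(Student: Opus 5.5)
By Theorem~\ref{thm5.A}, we already know that $\varphi_r\conv\psi_s\in\Psi(-t)$. It therefore remains only to check parity. We show that if $\psi$ vanishes on $B_\odd$ (respectively on $B_\even$), then so does $\varphi_r\conv\psi_s$.

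The key observation is that $\wt\De$ is homogeneous with respect to the grading in which $A$ is doubled. Indeed, the twisted action $\na(a\otimes b)=\pi(a)b$ sends $A_k\otimes B_l$ into $B_{2k+l}$, because $\pi$ doubles degrees. Taking adjoints with respect to the inner products $\langle\ccdot,\ccdot\rangle_{t^2}$ and $\langle\ccdot,\ccdot\rangle_{-t}$, which make distinct homogeneous components orthogonal, we get
\[
\wt\De(B_n)\subset\bigoplus_{2k+l=n}A_k\otimes B_l .
\]
This can also be read off directly from \eqref{eq5.D}. There the condition $(2\rho)\cup\si=\tau$ forces $|\si|=|\tau|-2|\rho|$, so $|\si|\equiv|\tau|\pmod 2$.

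Now let $b\in B$ be homogeneous and write $\wt\De(b)=\sum_i a_i\otimes b_i$ with homogeneous components. Every $b_i$ then has degree of the same parity as $b$. Hence
\[
(\varphi_r\conv\psi_s)(b)=\sum_i\varphi(a_i)\psi(b_i)\,r^{\deg a_i}s^{\frac12\deg b_i}.
\]
If $\psi\in\Psi_\even(-t)$ and $\deg b$ is odd, each $\psi(b_i)=0$, so this value is $0$. Thus $\varphi_r\conv\psi_s$ vanishes on $B_\odd$, which together with Theorem~\ref{thm5.A} gives (i). Symmetrically, if $\psi\in\Psi_\odd(-t)$ and $\deg b$ is even, the value is again $0$, which gives (ii).

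There is no real obstacle here. The only point needing care is the degree bookkeeping for the adjoint map $\wt\De$, and formula \eqref{eq5.D} settles it.
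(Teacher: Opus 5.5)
Your proof is correct and follows essentially the same route as the paper, which calls the corollary ``now evident'' without further detail. Theorem~\ref{thm5.A} gives membership in $\Psi(-t)$, and the constraint $|\si|=|\tau|-2|\rho|$ in \eqref{eq5.D} shows that $\wt\De$ preserves the parity of degree in the $B$-factor, so vanishing on $B_\odd$ (resp.\ $B_\even$) passes from $\psi$ to $\varphi_r\conv\psi_s$.
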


Next, recall that $(\Psi_\even)_1(-t)\subset\Psi_\even(-t)$ and $(\Psi_\odd)_1(-t)\subset\Psi_\odd(-t)$ serve as bases of the cones, and are determined by the normalizations $\psi(1)=1$ and $\psi(p_1)=1$, respectively.
As argued in Section~\ref{sect3}, finding a description of these convex sets (or of their sets of extreme points) would result in a complete description of $\Psi(-t)$ and would solve Problem~\ref{problem1.A}.
Our adapted Kerov's construction also yields new functionals in $(\Psi_\even)_1(-t)$ and $(\Psi_\odd)_1(-t)$ from known ones.

\begin{corollary}\label{cor5.B}
In addition to the assumptions above, assume that $\varphi\in\Phi_1(t^2)$. Then:

{\rm(i)} If, additionally, $\psi\in(\Psi_\even)_1(-t)$, then $\varphi_r\conv \psi_s\in(\Psi_\even)_1(-t)$.

{\rm(ii)} If, additionally, $\psi\in(\Psi_\odd)_1(-t)$, then $s^{-\frac12}\psi_s$ is a well-defined functional in $(\Psi_\odd)_1(-t)$, even when $s=0$ (using the convention that $0^0=1$), and $\varphi_r\conv s^{-\frac12}\psi_s\in(\Psi_\odd)_1(-t)$.
\end{corollary}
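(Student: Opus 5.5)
By Corollary~\ref{cor5.A}, we already know that $\varphi_r\conv\psi_s$ lies in $\Psi_\even(-t)$ (resp.\ $\Psi_\odd(-t)$) in cases (i) and (ii). So the plan reduces to checking the normalization, plus the well-definedness claim for $s^{-\frac12}\psi_s$ in (ii). The main tool is formula~\eqref{eq5.C}, since $\varphi_r\conv\psi_s$ evaluated at a single power sum $p_\tau$ is a finite explicit sum.

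For (i), take $\tau=\varnothing$ in \eqref{eq5.C}. The only splitting is $\rho=\si=\varnothing$, and all $z$-factors equal $1$. Hence
$(\varphi_r\conv\psi_s)(1)=\varphi(1)\psi(1)=1$, using the convention $0^0=1$ when $r$ or $s$ vanishes.

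For (ii), first handle $s^{-\frac12}\psi_s$. On homogeneous $b\in B_\odd$ of degree $2k+1$ it is $\psi(b)s^{k}$. This expression makes sense for all $s\ge0$: the exponent $k$ is a nonnegative integer, and for $s=0$ we use $0^0=1$. It is still $(-t)$-HL-positive, since it rescales each homogeneous component by a nonnegative factor. It is also $p_2$-harmonic only when combined appropriately, and this is the subtle point. Indeed, $(s^{-\frac12}\psi_s)(p_2b)=s\,(s^{-\frac12}\psi_s)(b)$, so by itself it is not harmonic unless $s=1$.

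I would therefore not claim harmonicity of $s^{-\frac12}\psi_s$ separately. Instead, I would note that $\varphi_r\conv s^{-\frac12}\psi_s$ equals $s^{-\frac12}(\varphi_r\conv\psi_s)$ for $s>0$. This follows by linearity of $\conv$ in the second argument, so for $s>0$ it lies in $\Psi_\odd(-t)$ by Corollary~\ref{cor5.A}. For $s=0$ (so $r=\tfrac12$), I would either argue by continuity in $s$ or directly rerun the proof of Proposition~\ref{prop5.C} with the weight $s^{(|\si|-1)/2}$ in \eqref{eq5.C}. Continuity works because each value on $p_\tau$ is a polynomial in $s$, and the limit preserves positivity and harmonicity as in Proposition~\ref{prop2.D}. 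In the direct approach, the binomial computation is unchanged: the inner sum over $(k,l)$ still collapses to $(2r+s)^m=1$, since only the $\bar\si$-part carries the extra $s^{-1/2}$.

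Normalization in (ii) comes from $\tau=(1)$ in \eqref{eq5.C}. The only splitting is $\rho=\varnothing$, $\si=(1)$, and the $z$-ratio is $1$. So the value is $\varphi(1)\,(s^{-\frac12}\psi_s)(p_1)=\psi(p_1)s^0=1$.

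The only delicate step is the $s=0$ case of (ii). There I expect the continuity (or direct recomputation) argument to need care that no negative powers of $s$ appear; they do not, because $|\si|$ is odd whenever $\tau$ has odd size.
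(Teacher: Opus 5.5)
Your proposal is correct. For (i) and for both normalizations it is the paper's argument: evaluating \eqref{eq5.C} at $\tau=\varnothing$ and $\tau=(1)$ is the same as using $\wt\De(1)=1\otimes1$ and $\wt\De(p_1)=1\otimes p_1$, and Corollary~\ref{cor5.A} supplies membership in the cone.

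Where you depart from the paper, you are more careful than it is. You are right that $s^{-\frac12}\psi_s$ is not $p_2$-harmonic for $s<1$: its values at $p_1$ and $p_2p_1$ are $1$ and $s$. So the assertion that it lies in $(\Psi_\odd)_1(-t)$ is literally false. The paper's proof only checks that no negative powers of $s$ occur, and that is all that is needed: $s^{-\frac12}\psi_s$ is a $(-t)$-HL-positive functional on $B_\odd$ with value $1$ at $p_1$.

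Similarly, the paper calls (ii) an immediate consequence of $\wt\De(p_1)=1\otimes p_1$, but that gives only the normalization. Membership in $\Psi_\odd(-t)$ follows from Corollary~\ref{cor5.A} by rescaling only when $s>0$. At $s=0$ the functional $\varphi_{1/2}\conv\psi_0$ is identically zero and cannot be rescaled, so an extra argument such as your continuity or direct-recomputation step is genuinely required.

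A third option for the endpoint: by \eqref{eq5.C}, only $\sigma=(1)$ contributes when $s=0$. Hence $\varphi_{1/2}\conv 0^{-\frac12}\psi_0$ takes the value $2^{\ell(\rho)-|\rho|}\varphi(p_\rho)$ at $p_{(2\rho)\cup(1)}$ and $0$ at all other $p_\sigma$, $\sigma\in\Y_\odd$. By \eqref{eq4.F}, this is exactly the functional $\psi_\odd$ of Theorem~\ref{thm4.B}, so the case $s=0$ is Theorem~\ref{thm4.B} itself. This is consistent with the remark following the corollary.
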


\begin{proof}
We argue first that $s^{-\frac12}\psi_s$ is a valid functional in $(\Psi_\odd)_1(-t)$, even when $s=0$.
Indeed, express $s^{-\frac12}\psi_s(b)=\psi(b)s^{\frac12 (\deg b-1)}$.
Then, since $b\in B_\odd$ implies $\deg b\ge 1$, we have that $s^{-\frac12}\psi_s(b)$ equals zero, when $s=0$, $\deg b>1$, and equals $\psi(b)$, when $s=0$, $\deg b=1$.

Finally, both (i) and (ii) are immediate consequences of the definition~\eqref{eq5.B} of the binary map $\conv$, together with $\wt\Delta(1)=1\otimes 1$ (for (i)) and $\wt\Delta(p_1)=1\otimes p_1$ (for (ii)).
\end{proof}

\begin{remark}
Theorems \ref{thm4.A} and \ref{thm4.B} can be obtained from Theorem \ref{thm5.A} as limit cases.
Specifically, let $\varphi\in\Phi(t^2)$ and $\psi_\even\in\Psi_\even(-t)$ be as in Theorem \ref{thm4.A}.
Next, let $\psi: B_\even\to\R$ be an \emph{arbitrary} linear functional such that $\psi(1)=1$.
Then 
$$
\lim_{s\to +0} (\varphi_{\frac{1-s}2}\conv\psi_s)(b)=\psi_\even(b), \quad b\in B_\even.
$$

Likewise,  let $\varphi\in\Phi(t^2)$ and $\psi_\odd\in\Psi_\odd(-t)$ be as in Theorem \ref{thm4.B}. Next, let $\psi: B_\odd\to\R$ be an \emph{arbitrary} linear functional such that $\psi(p_1)=1$. Then 
$$
\lim_{s\to +0} (\varphi_{\frac{1-s}2}\conv s^{-\frac12}\psi_s)(b)=\psi_\odd(b), \quad b\in B_\odd.
$$
\end{remark}

\section{The action of the twisted comultiplication map $\wt\De$}\label{sect6}

\subsection{Interaction between $\De$ and $\wt\De$}

Let ${\mathrm{m}}$ denote the standard multiplication in $\Sym$ viewed as a map $A^{\otimes2}\to A$. Consider the map $\xi:={\mathrm{m}}\circ\De$, which is the composition 
$$
A \to A^{\otimes 2} \to A.
$$
It is an algebra morphism sending each $p_k$ to $2p_k$. Recall that $\pi: A\to A$ is our notation for another algebra morphism, which sends $p_k$ to $p_{2k}$. 

Below, we use Sweedler's shorthand notation for comultiplication. Thus, for any $a\in A$ and $b\in B$, we will write
\begin{equation}
\De (a)=a_{(1)}\otimes a_{(2)} \in A^{\otimes2}, \qquad \wt\De(b)=b_{(1)} \otimes b_{(2)} \in A\otimes B.
\end{equation}

\begin{theorem}\label{thm6.A}
In this notation, 
\begin{equation}\label{eq6.A}
\wt\De(\pi(a)b)=\xi(a_{(1)})b_{(1)} \otimes \pi(a_{(2)}) b_{(2)}.
\end{equation}
\end{theorem}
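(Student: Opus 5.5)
We verify \eqref{eq6.A} on the power-sum bases, using the explicit formula \eqref{eq5.D} for $\wt\De$. Both sides of \eqref{eq6.A} are bilinear in $(a,b)$, so it suffices to take $a=p_\mu$ and $b=p_\tau$ for $\mu,\tau\in\Y$. The left-hand side is then $\wt\De(p_{(2\mu)\cup\tau})$. For the right-hand side, $p_k$ is primitive, so
$$
\De(p_\mu)=\sum_{\alpha\cup\beta=\mu}c_{\alpha\beta}\,p_\alpha\otimes p_\beta,\qquad c_{\alpha\beta}=\prod_i\binom{m_i(\mu)}{m_i(\alpha)}=\frac{z_\mu}{z_\alpha z_\beta}.
$$
Moreover $\xi(p_\alpha)=2^{\ell(\alpha)}p_\alpha$ and $\pi(p_\beta)=p_{2\beta}$. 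Hence the right-hand side equals
$$
\sum_{\alpha\cup\beta=\mu}\ \sum_{(2\rho)\cup\si=\tau}\frac{z_\mu}{z_\alpha z_\beta}\,2^{\ell(\alpha)}\frac{z_\tau(-t)}{z_\rho(t^2)z_\si(-t)}\;p_{\alpha\cup\rho}\otimes p_{(2\beta)\cup\si}.
$$
By \eqref{eq5.D}, the left-hand side equals
$$
\sum_{(2\rho')\cup\si'=(2\mu)\cup\tau}\frac{z_{(2\mu)\cup\tau}(-t)}{z_{\rho'}(t^2)z_{\si'}(-t)}\,p_{\rho'}\otimes p_{\si'}.
$$

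Next we compare the coefficient of a fixed $p_{\rho'}\otimes p_{\si'}$ on both sides. Everything factorizes over part sizes, because the $z$'s are products over $i$ of factors depending only on multiplicities. Write $M_j=m_j((2\mu)\cup\tau)=m_j(\tau)+m_{j/2}(\mu)$ for even $j$. On the left, $m_j(\si')=M_j-m_{j/2}(\rho')$ for even $j$; odd parts of $(2\mu)\cup\tau$ must all go into $\si'$. So for each $i$ there is a single term with weight
$$
\frac{(2i)^{M_{2i}}M_{2i}!\,(1-t^{2i})^{-M_{2i}}}{i^{a}a!\,(1-t^{2i})^{-a}\,(2i)^{M_{2i}-a}(M_{2i}-a)!\,(1-t^{2i})^{-(M_{2i}-a)}}=2^a\binom{M_{2i}}{a},
$$
where $a=m_i(\rho')$. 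Here we use $1-(-t)^{2i}=1-t^{2i}$, which is exactly the mechanism behind $z_{2\rho}(-t)/z_\rho(t^2)=2^{\ell(\rho)}$ in the proof of Theorem~\ref{thm4.A}. The odd-part factors cancel, giving $1$.

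On the right, we fix $i$ and write $n=m_i(\mu)$, $m=m_{2i}(\tau)$, $k=m_i(\alpha)$, $c=m_i(\rho)$. The target is $k+c=a$ and $(n-k)+(m-c)=M_{2i}-a$. The same computation yields the weight $\binom nk 2^k\cdot 2^c\binom mc=2^a\binom nk\binom mc$. Summing over $k+c=a$ and applying Vandermonde gives $2^a\binom{n+m}{a}=2^a\binom{M_{2i}}{a}$, which matches the left side.

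The main obstacle is bookkeeping: the part $i$ of $\rho'$ arises from two sources (from $\alpha\subset\mu$ via $\xi$, and from $\rho$ via splitting $\tau$), so the factor-by-factor comparison requires care. One must check that the interactions between sizes $i$ and $2i$ really decouple. They do, since each coefficient on either side is a product over $i$ of a factor depending only on the multiplicities of $i$ in $\rho'$, $\alpha$, $\rho$ and of $2i$ in $\mu$-doubled and $\tau$. Once this is organized, the identity reduces to the Vandermonde convolution above, together with the observation that the $(1-t^{2i})$ factors cancel identically.
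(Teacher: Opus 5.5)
Your proof is correct and is essentially the paper's argument. The paper proves the case $a=p_k^n$, $b=p_{2k}^m$ using the binomial expansion of $\De(p_k^n)$, the formula $\wt\De(p_{2k}^m)=\sum_\be\binom m\be 2^\be p_k^\be\otimes p_{2k}^{m-\be}$ and Vandermonde's identity, and then glues different part sizes and the odd parts via the multiplicativity \eqref{eq6.C} of $\wt\De$ on power sums with no common parts; you perform the same factorization over the pairs of part sizes $(i,2i)$ directly on the coefficients of $p_{\rho'}\otimes p_{\si'}$, which is the same computation done in one step.
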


\begin{proof}
It suffices to prove the equality when both $a,b$ are products of power sum symmetric functions.
We divide the proof into steps.

\smallskip

\emph{Step 1.} First, let us prove it in the special case when
$$
a=a_n:=(p_k)^n, \qquad b=b_m:=(p_{2k})^m,
$$ 
for a fixed $k\ge 1$, and arbitrary $m,n\ge 0$.

Since $a_n=a_1^n$ and $\De(a_1)=a_1\otimes 1+1\otimes a_1$, we have
\begin{equation}\label{eq6.B1}
\De(a_n)=\De(a_1^n)=\De(a_1)^n=(a_1\otimes 1+1\otimes a_1)^n=\sum_{\al\ge0}\binom n\al a_\al\otimes a_{n-\al},
\end{equation}
where the second equality follows because $\De$ is an algebra homomorphism and the last one is due to the binomial formula.
Note that omitting the upper limit of summation is correct because $\binom{n}\al$ automatically vanishes when $\al>n$.

Next, we compute $\wt\De(b_m)=\wt\De\big((p_{2k})^m\big)$ from equation~\eqref{eq5.D}.
Note that the only partitions $\rho,\sigma$ such that $2\rho\cup\sigma=(2k)^m$ are $\rho=(k^\beta)$ and $\sigma=\big((2k)^{m-\beta}\big)$, for some $0\le\beta\le m$.
Therefore
\begin{equation}\label{eq6.D}
\wt\De(b_m) = \wt\De\big((p_{2k})^m\big) = \sum_{\beta=0}^m{ \frac{z_{(2k)^m}(-t)}{z_{(k^\beta)}(t^2)z_{(2k)^{m-\beta}}(-t)}\, a_\beta\otimes b_{m-\beta} }.
\end{equation}
From definition~\eqref{eq4.D},
\begin{gather*}
z_{(2k)^m}(-t) = (2k)^m m!\cdot (1 - (-t)^{2k})^{-m},\\
z_{(k^\beta)}(t^2) = k^\beta\beta!\cdot\big(1 - (t^2)^k\big)^{-\beta},\qquad
z_{(2k)^{m-\beta}}(-t) = (2k)^{m-\beta} (m-\beta)!\cdot (1 - (-t)^{2k})^{-m+\beta}.
\end{gather*}
Plugging these values back into equation~\eqref{eq6.D}, we obtain
\begin{equation}\label{eq6.B2}
\wt\De(b_m) = \sum_{\be\ge0} \binom m\be 2^\be a_\be \otimes b_{m-\be},
\end{equation}
where the sum ranges over all $\beta\ge 0$ because ${\binom m\be}=0$, as soon as $\be>m$.

\smallskip

Let us go back to~\eqref{eq6.A} that we want to prove.
The previous equation~\eqref{eq6.B2} shows that the left-hand side of \eqref{eq6.A} is equal to
\begin{equation}\label{eq6.B3}
\wt\De\big(\pi(a_n)b_m\big)=\wt\De\big(b_nb_m\big)=\wt\De(b_{n+m})=\sum_{\ga\ge0} \binom {n+m}\ga 2^\ga a_\ga \otimes b_{n+m-\ga}.
\end{equation}
By Vandermode's identity,
$$
\binom {n+m}\ga=\sum_{\substack{\al,\be\ge 0\\ \al+\be=\ga}}\binom n\al \binom m\be,
$$
so \eqref{eq6.B3} can be rewritten as
\begin{equation}\label{eq6.B4}
\wt\De(\pi(a_n)b_m)=\sum_{\al\ge0}\sum_{\be\ge0}  \binom n\al \binom m\be 2^{\al+\be} a_{\al+\be} \otimes b_{n+m-\al-\be}.
\end{equation}
This is the left-hand side of \eqref{eq6.A} in our particular case.

On the other hand, by the expansions \eqref{eq6.B1} and \eqref{eq6.B2} of $\De(a_n)$ and $\wt\De(b_m)$, respectively, the right-hand side of \eqref{eq6.A} is equal to
\begin{equation}\label{eq6.B5}
\sum_{\al\ge 0}\sum_{\be\ge 0}{ \binom n\al \binom m\be 2^\be \xi(a_\al) a_\be \otimes \pi(a_{n-\al}) b_{m-\be} }.
\end{equation}
By the definitions, $\xi(a_\al)a_\be=2^\al a_\al a_\be=2^\al a_{\al+\be}$, and $\pi(a_{n-\al})b_{m-\be}=b_{n-\al}b_{m-\be}=b_{n+m-\al-\be}$.
By plugging these equalities into \eqref{eq6.B5}, we see that the resulting expression matches \eqref{eq6.B4}.
Hence, the desired \eqref{eq6.A} is proved in our special case.

\smallskip

\emph{Step 2.} Next, we prove equation \eqref{eq6.A} when
\begin{equation*}
a=\prod_{k\ge 1}{(p_k)^{n_k}}, \qquad b=\prod_{k\ge 1}{(p_{2k})^{m_k}},
\end{equation*}
for nonnegative integers $n_k$, $m_k$, of which only finitely many are nonzero.

If $\tau'$ and $\tau''$ have no common parts, then by \eqref{eq4.D}, it is evident that $z_{\tau'\cup\tau''}(\cdot)=z_{\tau'}(\cdot) z_{\tau''}(\cdot)$; also, $p_{\tau'\cup\tau''}=p_{\tau'}p_{\tau''}$.
Moreover, the partitions in a pair $(\rho,\sigma)$ satisfying $2\rho\cup\sigma=\tau$ can be uniquely decomposed as $\rho=\rho'\cup\rho''$ and $\sigma=\sigma'\cup\sigma''$, in such a way that $2\rho'\cup\sigma'=\tau'$ and $2\rho''\cup\sigma''=\tau''$.
Consequently, from the formula \eqref{eq5.D} for $\wt\De$, we deduce that
\begin{equation}\label{eq6.C}
\wt\De(p_{\tau'\cup\tau''}) = \wt\De(p_{\tau'})\wt\De(p_{\tau''}),\text{ if $\tau',\tau''$ have no common parts.}
\end{equation}
From Step 1, equation \eqref{eq6.A} holds for $a=(p_k)^{n_k}$, $b=(p_{2k})^{m_k}$.
By multiplying these equalities, over all $k\ge 1$, and making use of \eqref{eq6.C}, we find that \eqref{eq6.A} holds for the desired $a, b$.

\smallskip

\emph{Step 3.} Here we verify \eqref{eq6.A} for $a=1$ and arbitrary $b$.
Since $\pi(1)=1$, the left-hand side of \eqref{eq6.A} is $\wt\De(\pi(1)b)=\wt\De(b)$.
Also, $\Delta(1)=1\otimes 1$ implies that $\xi(1)=1$ and that the right-hand side of \eqref{eq6.A} equals $\xi(1)b_{(1)} \otimes \pi(1) b_{(2)}=b_{(1)}\otimes b_{(2)}$.
Hence, both sides agree.

\smallskip

\emph{Step 4.} Finally, the most general case is when
$$
a=\prod_{k\ge 1}{(p_k)^{n_k}}, \qquad b=\prod_{k\ge 1}{(p_{2k})^{m_k}}\cdot\bar b,
$$
for nonnegative integers $n_k$, $m_k$, of which only finitely many are nonzero, and $\bar b$ is a monomial in $p_1, p_3, p_5,\dots$ (odd indices). 
In Step~2, we proved the equality \eqref{eq6.A} for $a=\prod_{k\ge 1}{(p_k)^{n_k}}$, $b=\prod_{k\ge 1}{(p_{2k})^{m_k}}$, while Step~3 proves it for $a=1$, $b=\bar b$.
By multiplying them, and using~\eqref{eq6.C}, the desired equality follows.
\end{proof}

\begin{remark}
Setting $a=p_1$ in \eqref{eq6.A} leads to another proof of Proposition~\ref{prop5.C}.
\end{remark}

\begin{remark}
Recall that $B$ is an $A$-module with respect to the map $\na$, that is, with respect to the action $a\cdot b=\na(a\otimes b)=\pi(a)b$, for $a\in A$, $b\in B$.
One can also endow $A\otimes B$ with the following $A$-module structure:
\begin{equation*}
a\cdot(a'\otimes b'):=\xi(a_{(1)})a'\otimes\pi(a_{(2)})b',\qquad a\in A,\quad a'\otimes b'\in A\otimes B,
\end{equation*}
where the right-hand side uses Sweedler's notation, and $\De (a)=a_{(1)}\otimes a_{(2)}$.
Then Theorem~\ref{thm6.A} can be interpreted as saying that $\wt{\Delta}:B\to A\otimes B$ is an $A$-module homomorphism.
\end{remark}

\subsection{Analogies with van Leeuwen's and our previous work}

The constructions and results of this section (more generally, of this paper) were motivated by our research on measures on spaces of infinite matrices that are invariant with respect to the action of $\U(2\infty,\F_{q^2})$, where $q$ is an odd prime power.
We showed in~\cite{CO2} that the problem of interest is equivalent to the classification of positive harmonic functionals on the space $B_{q^2}=\bigoplus_{n=0}^\infty{\mathcal{C}(\mathfrak{u}(2n,\F_{q^2}))}$ of invariant functions on all Lie algebras $\mathfrak{u}(2n, \F_{q^2})$, $n\in\Z_{\ge 0}$.
Then we studied this classification problem by appealing to the module and comodule structures of $B_{q^2}$ with respect to the space $A_{q^2}=\bigoplus_{n=0}^\infty{\mathcal{C}(\mathfrak{gl}(n,\F_{q^2}))}$ of invariant functions on all $\mathfrak{gl}(n, \F_{q^2})$, $n\in\Z_{\ge 0}$.

The analogy between our aforementioned work and the results in the present paper is suggestive of a deeper link.
In fact, Theorem~\ref{thm6.A} is the analogue of \cite[Theorem 9.2]{CO2}, which describes what we call the ``twisted bimodule'' structure of $B_{q^2}$ with respect to $A_{q^2}$.
Likewise, Theorem~\ref{thm5.A} is the analogue of \cite[Theorem 7.3]{CO2} --- this is some sort of Kerov's mixing construction.

It seems plausible that this is more than a similarity: our work is kind of a translation of the construction of~\cite{CO2} to the language of symmetric functions, like a characteristic map.
More explicitly, if we restrict the setting of our note~\cite{CO2} to the subspaces $B_{q^2}^0\subset B_{q^2}$ and $A_{q^2}^0\subset A_{q^2}$, consisting of invariant functions supported on nilpotent matrices, then it is plausible that $B_{q^2}^0$ and $A_{q^2}^0$ can be identified with our $B_\even$ and $A$, respectively, in such a way that $\na:A\otimes B_\even\to B_\even$ and $\wt\Delta:B_\even\to A\otimes B_\even$ exactly coincide with the module and comodule structures of representation-theoretic origin from our previous work.
Let us point out here that in another recent paper, Shen and Van Peski prove a similar result in the related setting of abelian $p$-groups; see~\cite[Theorem~1.1 (Hermitian case)]{SVP2}.

Our note~\cite{CO2} itself was motivated by the work of van Leeuwen~\cite{L} that proves a version of \emph{Mackey's formula}, an identity relating the functors of parabolic induction and restriction between certain categories of finite Lie group representations.
This leads to an identity involving induced and restricted characters of Lie group representations.
Our \cite[Theorem~9.2]{CO2} is the parallel result for functions on Lie algebras that are invariant with respect to their Lie group actions.
Hence, Theorem~\ref{thm6.A} is the symmetric function version of Mackey's formula, which strips the representation-theoretic origin of the maps $\na$ and $\wt\Delta$.

\section{Final remarks}\label{sect7}

\subsection{}

The standard coproduct $\Delta:\Sym\to\Sym^{\otimes 2}$ cannot be used in the Kerov-type construction of Proposition~\ref{prop5.C} (that uses instead the $p_2$-twisted $\wt\Delta$) to produce new functionals in $\Psi(-t)$ from old ones.

Indeed, recall that the key statement for the original Kerov's construction from Section~\ref{sec2.3} was Lemma~\ref{lem2.A}, which states that the structure constants $f^\la_{\mu\nu}(t)$ of $\Delta$ with respect to the basis $\{Q_\la(;t):\la\in\Y\}\subset\Sym$ are nonnegative, for all $t\in (0,1)$.
The basis of $Q$-Hall-Littlewood symmetric functions was the chosen one because the positivity condition defining functionals $\varphi$ in $\Phi_1(t)$ is equivalent to $\varphi(Q_\la(\cdot;t))\ge 0$, for all $\la\in\Y$.
And actually, in the main application of Kerov's construction discussed in Section~\ref{sec2.4}, we only used that the structure constants $f^\la_{\mu\nu}(t)$ are nonnegative in the special cases when $\nu$ is a row or a column partition.

The positivity condition defining functionals $\psi$ in $\Psi(-t)$ is equivalent to $\psi(\wt{Q}_\la(\cdot;-t))\ge 0$, for all $\la\in\Y$, where $\wt{Q}_\la(;-t)=(-1)^{n(\la)}Q_\la(;-t)$.
The structure constants for the standard coproduct $\Delta$ with respect to this basis are the unique values $\wt{f}^\la_{\mu\nu}(t)$ such that
\[
\Delta\big( \wt{Q}_\la(;-t) \big) = \sum_{\mu,\nu}{ \wt{f}^\la_{\mu\nu}(t) \wt{Q}_\mu(;-t)\otimes\wt{Q}_\nu(;-t) },
\]
for all $\la$. They are equal to
\begin{equation}\label{eq7.A}
\wt{f}^\la_{\mu\nu}(t) = (-1)^{n(\la)-n(\mu)-n(\nu)}f^\la_{\mu\nu}(-t),
\end{equation}
where $f^\la_{\mu\nu}(-t)$ are the structure constants with respect to $\{Q_\la(;-t):\la\in\Y\}$.
We check now that the values \eqref{eq7.A} are not necessarily positive, even when $\nu$ is a row or column partition.
This will prove our claim about the unsuitability of $\Delta$ for Kerov's construction applied to $\Psi(-t)$.

Indeed, note that if $\nu$ is a one-column or a one-row Young diagram of size $r$, meaning that $\nu=(1^r)$ or $\nu=(r)$, then $f^\la_{\mu\nu}(-t)$ is a Pieri coefficient for the basis of HL functions $\{P_\la(;-t):\la\in\Y\}$, for which explicit formulas are available in \cite[Ch.~III, (3.2) and (3.10)]{Mac}.
These formulas show that $f^\la_{\mu\nu}(-t)$ is nonzero if and only if the skew diagram $\theta:=\la\setminus\mu$ is a vertical or horizontal $r$-strip, and moreover $f^\la_{\mu\nu}(-t)\ge 0$, whenever $t\in(0,1)$.
Therefore, if $\wt{f}^\la_{\mu\nu}(-t)$ is nonzero, then it has the same sign as
\[
(-1)^{n(\la)-n(\mu)-n(\nu)} = (-1)^{n(\theta)}\cdot (-1)^{n(\nu)}.
\]
Whether $\nu=(1^r)$ or $\nu=(r)$, the sign $(-1)^{n(\nu)}$ is determined, so the sign of $\wt{f}^\la_{\mu\nu}(-t)$ depends only on the parity of $n(\theta)=\sum_{i\ge 1}{(i-1)\theta_i}$.
However, for a \emph{skew} strip $\theta$, the quantity $n(\theta)$ can be both even or odd, depending on the form of $\theta$, so that the sign of $\wt{f}^\la_{\mu\nu}(-t)$ can be positive or negative, verifying our claim.

\subsection{}

Under the embedding $\Phi_1(t^2)\hookrightarrow(\Psi_\even)_1(-t)$ of Theorem~\ref{thm4.A}, the `Plancherel functional' from $\Phi_1(t^2)$, namely
$$
\varphi^{\text{Planch}}(p_\rho) = \begin{cases}
1,&\text{ if }\rho=(1^n),\text{ for some }n\in\Z_{\ge 0},\\
0,&\text{ otherwise},
\end{cases}
$$
is mapped to the `Plancherel functional' from $(\Psi_\even)_1(-t)$, namely
$$
\psi_{\even}^{\text{Planch}}(p_{2\rho}) = \begin{cases}
1,&\text{ if }\rho=(1^n),\text{ for some }n\in\Z_{\ge 0},\\
0,&\text{ otherwise}.
\end{cases}
$$
This follows from equation~\eqref{eq4.E}.
Likewise, equation~\eqref{eq4.F} shows that the embedding $\Phi_1(t^2)\hookrightarrow(\Psi_\odd)_1(-t)$ of Theorem~\ref{thm4.B} maps $\varphi^{\text{Planch}}$ to the `Plancherel functional' from $(\Psi_\odd)_1(-t)$:
$$
\psi_{\odd}^{\text{Planch}}(p_{2\rho\,\cup\,(1)}) = \begin{cases}
1,&\text{ if }\rho=(1^n),\text{ for some }n\in\Z_{\ge 0},\\
0,&\text{ otherwise}.
\end{cases}
$$
Both $\psi_{\even}^{\text{Planch}}$ and $\psi_{\odd}^{\text{Planch}}$ have appeared before in \cite[Section~8.4.1]{CO1}.

\bigskip

Cesar Cuenca:

\smallskip

${}^1$Department of Mathematics, The Ohio State University, Columbus, OH, USA.

\smallskip

Email address: cesar.a.cuenk@gmail.com

\bigskip

Grigori Olshanski:

\smallskip

${}^2$Higher School of Modern Mathematics, MIPT, Moscow, Russia;

\smallskip

${}^3$Skolkovo Institute of Science and Technology, Moscow, Russia;

\smallskip

${}^4$HSE University, Moscow, Russia.

\smallskip

Email address: olsh2007@gmail.com

\end{document}